\documentclass[12pt]{amsart}

\usepackage{amssymb}

\usepackage{amsthm}
\usepackage{mathtools}
\usepackage{latexsym}
\usepackage{amsmath}
\usepackage{amssymb}

\usepackage{amsthm}
\usepackage[all]{xy}

\usepackage{hyperref}

\hypersetup{
    colorlinks=true,       
    linkcolor=blue,          
    citecolor=blue,        
    filecolor=blue,      
    urlcolor=blue           
}

\newtheorem{thm}{Theorem}
\newtheorem{prop}[thm]{Proposition}

\newtheorem{lem}[thm]{Lemma}

\newtheorem{rem}[thm]{Remark}

\newtheorem{ex}[thm]{Example}

\begin{document}

\title[On a Homma-Kim conjecture]{On a Homma-Kim conjecture for nonsingular hypersurfaces}

\author{Andrea Luigi Tironi}

\date{\today}

\address{
Departamento de Matem\'atica,
Universidad de Concepci\'on,
Casilla 160-C,
Concepci\'on, Chile}
\email{atironi@udec.cl}

\subjclass[2010]{Primary: 14J70, 11G25; Secondary: 05B25. Key words and phrases: hypersurfaces, finite fields, number of rational points.}

\maketitle

\begin{abstract}
Let $X^n$ be a nonsingular hypersurface of degree $d\geq 2$ in the projective space $\mathbb{P}^{n+1}$ 
defined over a finite field $\mathbb{F}_q$ of $q$ elements.  
We prove a Homma-Kim conjecture on a upper bound about the number of $\mathbb{F}_q$-points of $X^n$ for $n=3$, and for any odd integer $n\geq 5$ and $d\leq q$. 
\end{abstract}

\section{Introduction}\label{intro}

Let $\mathbb{F}_q$ be a finite field with $q$ elements and consider a hypersurface $X^n$ in the projective space $\mathbb{P}^{n+1}$ defined over $\mathbb{F}_q$ of degree $d\geq 2$ and dimension $n\geq 3$.

There is a vast literature on the general problem of counting or finding bounds, especially upper bounds, on the number of $\mathbb{F}_q$-points of certain projective varieties and hypersurfaces defined over finite fields $\mathbb{F}_q$
(see, e.g., \cite{LW}, \cite{C}, \cite{RS}, \cite{BSS} and the references therein). 

Recently, Homma and Kim  established the following upper bound involving a Thas' invariant $k_{X^n}$ (\cite[Theorem 3.2]{HK7}, \cite[Proposition 3]{T1}),
\begin{equation}\label{k-bound}
N_q(X^n)\leq (d-1)q^{k_{X^n}}N_q(\mathbb{P}^{n-k_{X^n}})+N_q(\mathbb{P}^{k_{X^n}})
\end{equation}
which is sharp for $k_{X^n}>0$ (see \cite[Proposition $5$]{T1} for $k_{X^n}=0$), where $k_{X^n}$ is the dimension of a maximal $\mathbb{F}_q$-linear subspace $\mathbb{P}^{k_{X^n}}$ contained in $X^n$. 
Moreover, in \cite{T2} and \cite{T1} equalities in \eqref{k-bound} for any $k_{X^n}\leq n-1$ were considered and completely characterized by giving, up to projectivities, the complete list of all the hypersurfaces which reach the bound in \eqref{k-bound}.

On the other hand, in \cite{HK7} 
Homma and Kim observed that the hypersurfaces appearing in \cite{T2} are all singular, except when $n=2$ or $d=q+1$, concluding that if we restrict the investigation within nonsingular hypersurfaces, one can expect a tighter bound than the elementary bound in \cite{HK4}, that is, the bound \eqref{k-bound} with $k_{X^n}=n-1$. In line with this idea, they proved in \cite{HK7} that $k_{X^n}\leq \lfloor \frac{n}{2}\rfloor$ for nonsingular hypersurfaces $X^n$ and they showed
that nonsingular hypersurfaces $X^n$ in $\mathbb{P}^{n+1}$ with $n\geq 2$ even, really reach the equality in
\eqref{k-bound} with $k_{X^n}=\lfloor\frac{n}{2}\rfloor=\frac{n}{2}$ by giving, up to projectivities, a complete list of such hypersurfaces (see \cite[Theorem 4.1]{HK7}).

As to the case $n$ odd, they observed that equality in
\eqref{k-bound} with $k_{X^n}=\lfloor\frac{n}{2}\rfloor=\frac{n-1}{2}$ does not occur in the nonsingular case and
they finally proposed the following conjecture (see \cite[$\S 7.2$]{HK7}): 

\medskip

\noindent \textit{Conjecture (H-K)}. Let $X^n$ be a nonsingular hypersurface of degree $d\geq 2$ in $\mathbb{P}^{n+1}$ over $\mathbb{F}_q$. If $n\geq 3$ is an odd integer, then 
\begin{equation}\label{conj}
N_q(X^n)\leq\theta_n^{d,q}:=(d-1)q^{\frac{n+1}{2}}N_q(\mathbb{P}^{\frac{n-1}{2}})+N_q(\mathbb{P}^{\frac{n-1}{2}})\ . \tag{*}
\end{equation}

\medskip

\noindent Let us observe here that 
the upper bound \eqref{conj} is not trivial for $d\leq q+1$ and that in these cases it is sharp because there are at least three examples with $d=2,\sqrt{q}+1,q+1$, which satisfy the equality in \eqref{conj} (see e.g. \cite[$\S 7.2$]{HK7} for $d\leq q$ and Example \ref{example} in Section \ref{sec2} for $d=q+1$). 

\medskip

The purpose of this note is to show that \textit{Conjecture (H-K)} is true in the case $n=3$ (see \cite{D} for $n=3, d\leq q$ and $(d,q)\neq (4,4)$) and for any odd integer $n\geq 5$ when $d\leq q$. 

\smallskip

More precisely, we obtain the following two main results.

\begin{thm}\label{thm1}
Let $X^3$ be a nonsingular hypersurface of degree $d\geq 2$ in $\mathbb{P}^{4}$ defined over $\mathbb{F}_q$. Then 
$$N_q(X^3)\leq\theta_3^{d,q}\ ,$$
and equality holds only if there exists an $\mathbb{F}_q$-point $P\in X^n$ such that $X^3\cap T_PX^3$ is a cone $P*Y$ with vertex $P$ over a nonsingular curve $Y\subset\mathbb{P}^{2}$ such that $N_q(Y)=(d-1)q+1$.
\end{thm}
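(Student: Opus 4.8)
The plan is to project $X^3$ from one of its rational points, reducing the problem to a point bound for the tangent hyperplane section, a surface of degree $d$ in $\mathbb{P}^3$ with a singular point. If $X^3(\mathbb{F}_q)=\emptyset$ there is nothing to prove; otherwise fix $P\in X^3(\mathbb{F}_q)$ and put $H:=T_PX^3$ and $Z:=X^3\cap H$, a surface of degree $d$ in $H\cong\mathbb{P}^3$ that is singular at $P$. The $q^3+q^2+q+1$ lines of $\mathbb{P}^4$ through $P$ split into the $q^2+q+1$ lines lying in $H$ and the remaining $q^3$ lines. A line through $P$ contained in $X^3$ is tangent to $X^3$ at $P$, hence lies in $H$; consequently each of the $q^3$ lines off $H$ is not contained in $X^3$ and so meets $X^3$ in $P$ together with at most $d-1$ further $\mathbb{F}_q$-points, while every $\mathbb{F}_q$-point of $X^3$ on a line through $P$ inside $H$ is an $\mathbb{F}_q$-point of $Z$. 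Since every point of $X^3$ other than $P$ lies on exactly one line through $P$, this gives
$$N_q(X^3)\le (d-1)q^3+N_q(Z).$$
Because $(d-1)q^3+\bigl[(d-1)q^2+q+1\bigr]=(d-1)q^2(q+1)+(q+1)=\theta_3^{d,q}$, the theorem reduces to the bound $N_q(Z)\le(d-1)q^2+q+1$.

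\textit{The key surface estimate.} I will prove that a surface $S\subset\mathbb{P}^3$ of degree $d\le q+1$ over $\mathbb{F}_q$ with a singular $\mathbb{F}_q$-point $P$ and containing no $2$-plane satisfies $N_q(S)\le(d-1)q^2+q+1$; this applies to $S=Z$ since $k_{X^3}\le\lfloor 3/2\rfloor=1$ by \cite{HK7}, so $X^3$, and hence $Z$, contains no plane. Projecting from $P$, the $q^2+q+1$ lines of $\mathbb{P}^3$ through $P$ are of two types: those contained in $S$, whose directions form a subvariety $W\subset\mathbb{P}^2$ with no line component, and those not contained in $S$, which meet $S$ at $P$ with multiplicity $\ge 2$ and hence in at most $d-2$ further points. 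To keep the count sharp when $W$ is large I would first peel off the part of $S$ ruled by lines through $P$: with $P=[0\!:\!0\!:\!0\!:\!1]$ and defining form $F=y_3^{d-2}G_2+\cdots+G_d$, $G_i\in\mathbb{F}_q[y_0,y_1,y_2]$, set $\Phi:=\gcd(G_2,\dots,G_d)$ of degree $e$ (so $\Phi$ has no linear factor); then $F=\Phi\,\widetilde F$ and $S=(P*B)\cup S'$ with $B=V(\Phi)\subset\mathbb{P}^2$ of degree $e$ and no line component, $\deg S'=d-e$, and $S'$ containing only finitely many lines through $P$. One has $N_q(P*B)=q\,N_q(B)+1\le(e-1)q^2+q+1$ by the sharp plane-curve point bound (the settled Sziklai conjecture), while $N_q(S')$ is estimated from the projection bound together with a B\'ezout bound on the finite locus of lines of $S'$ through $P$ and the elementary bound for surfaces in $\mathbb{P}^3$; summing the two contributions gives $N_q(S)\le(d-1)q^2+q+1$.

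\textit{Equality.} If $N_q(X^3)=\theta_3^{d,q}$, then the displayed inequality and the surface estimate are equalities at every $P\in X^3(\mathbb{F}_q)$. Following the chain of inequalities in the surface estimate forces the residual $S'$ to be empty, so $e=d$ and $Z=P*B$ is a cone over a plane curve $B$ of degree $d$ with $N_q(B)=(d-1)q+1$; and since a singular point $Q$ of $B$ would make $Z$ singular along the whole ruling $PQ\subset X^3$—that is, $T_RX^3=H$ for all $R\in PQ$, a degeneracy incompatible with the nonsingularity of $X^3$—one concludes that $B=Y$ is nonsingular with $N_q(Y)=(d-1)q+1$, as stated.

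The principal difficulty lies in the surface estimate, specifically in making the projection-from-$P$ count tight uniformly in $d\le q+1$: when $\deg S'$ is close to $q$ the naive B\'ezout bound on the finite locus of lines of $S'$ through $P$ is too weak, and one must use both that $S'$ has no plane component and the structure of its singular locus at $P$. The borderline case $(d,q)=(4,4)$, where the plane-curve bound has its single known exception, must be treated separately by a direct argument exploiting that $Z$ is a tangent section of a nonsingular threefold, and the verification that $Y$ is nonsingular in the equality case likewise calls for a short argument valid in positive characteristic.
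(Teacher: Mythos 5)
Your opening reduction $N_q(X^3)\le (d-1)q^3+N_q(Z)$, $Z=X^3\cap T_PX^3$, is fine, but the whole proof then rests on your ``key surface estimate'', and that statement is false. A surface of degree $d\le q+1$ in $\mathbb{P}^3$ with no plane component and a rational singular point need not satisfy $N_q(S)\le (d-1)q^2+q+1$: over $\mathbb{F}_3$ take six rational points of $\mathbb{P}^2$, three on a line $L$ and three on another line $L'$ (none equal to $L\cap L'$, so there is no further collinear triple and no four collinear); the anticanonical image of the blow-up is an irreducible normal cubic surface $S\subset\mathbb{P}^3$ whose only singularity is a rational double point, and since all the geometry is split over $\mathbb{F}_3$ one gets $N_3(S)=(q^2+7q+1)-\bigl(2(q+1)-1\bigr)+1=25$, whereas your bound would give $(d-1)q^2+q+1=22$ (a split cubic with a single rational node over $\mathbb{F}_4$ gives $q^2+6q+1=41>37$ similarly). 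So the inequality you want for $Z$ cannot be proved from ``singular at $P$, no planes, $d\le q+1$'' alone; it genuinely requires the extra structure of $Z$ being a tangent hyperplane section of a nonsingular threefold, and that is exactly where your sketch is silent -- you yourself concede that the B\'ezout/elementary-bound count for the residual $S'$ breaks down when $\deg S'$ is near $q$, and you give no argument at all for $d=q+1$ or for the exceptional pair $(d,q)=(4,4)$. Your equality analysis has a second gap of the same kind: equality in the surface estimate does not force $Z$ to be a cone (the Cayley cubic over $\mathbb{F}_2$ attains $(d-1)q^2+q+1=11$ with four isolated nodes), so concluding $S'=\emptyset$ would at least require exploiting tightness of the projection inequality as well, which you never do; and, as you note, the nonsingularity of $Y$ needs a genuine characteristic-$p$ argument (this is Lemma \ref{lem-cone} in the paper).

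For contrast, the paper never bounds $N_q(Z)$ by itself. It splits according to whether some tangent section is a cone: in the cone case Lemma \ref{lem-cone} forces the base curve to be nonsingular, so the Sziklai bound applies, with a separate hands-on argument for the $(4,4)$ exception (Proposition \ref{n=3 cone}); in the non-cone case it takes an $\mathbb{F}_q$-line $l\subset X^3$, controls via Lemma \ref{lem-irred} and Datta's bound the number of planes through $l$ meeting $X^3$ in $d$ concurrent lines, and counts $N_q(X^3)$ over the $q^2+q+1$ planes through $l$, getting a strict inequality; the range $d=q+1$ then needs a further long analysis (Claims 3--4, the cases (A)--(D) for $q=2$, and a machine check at $(d,q)=(3,2)$). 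To salvage your approach you would have to replace the false general surface lemma by a statement proved using this finer information about $Z$, which in effect is what the paper does.
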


\begin{thm}\label{thm2}
Let $X^n$ be a nonsingular hypersurface of degree $d\geq 2$ in $\mathbb{P}^{n+1}$ defined over $\mathbb{F}_q$ with $n\geq 5$ an odd integer. If $d\leq q$, then 
$$N_q(X^n)\leq\theta_n^{d,q}\ ;$$
moreover, the equality is reached by a nonsingular hypersurfaces $X^n$ in $\mathbb{P}^{n+1}$ only if there exists an $\mathbb{F}_q$-point $P\in X^n$ such that $X^n\cap T_PX^n$ is a cone $P*Y$ with vertex $P$ over a nonsingular hypersurface $Y\subset\mathbb{P}^{n-1}$ such that $N_q(Y)=\theta_{n-2}^{d,q}$.
\end{thm}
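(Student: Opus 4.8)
The plan is to prove Theorem~\ref{thm2} by induction on the odd integer $n$, the base of the induction being the case $n=3$ (Theorem~\ref{thm1}), and to carry out the inductive step by a tangent hyperplane section argument in the spirit of Homma and Kim's treatment of the even-dimensional case. Fix an $\mathbb{F}_q$-point $P\in X^n$ and split the $\mathbb{F}_q$-lines of $\mathbb{P}^{n+1}$ through $P$ into the $N_q(\mathbb{P}^{n-1})$ lines contained in the tangent hyperplane $T_PX^n$ and the remaining $q^n$ lines. A line $\ell\ni P$ not contained in $T_PX^n$ is not contained in $X^n$ and meets $X^n$ at $P$ with multiplicity one, so it carries at most $d-1$ further $\mathbb{F}_q$-points, whereas the tangent lines through $P$ sweep out exactly $W_P:=X^n\cap T_PX^n$; hence $N_q(X^n)\le N_q(W_P)+(d-1)q^n$ for every such $P$. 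Using the identity $\theta_n^{d,q}=q\,\theta_{n-2}^{d,q}+(d-1)q^n+1$ --- immediate from $N_q(\mathbb{P}^{m})-q\,N_q(\mathbb{P}^{m-1})=1$ and $N_q(\mathbb{P}^{m})-N_q(\mathbb{P}^{m-1})=q^{m}$ with $m=(n-1)/2$ --- it is enough to produce one $P\in X^n(\mathbb{F}_q)$ with $N_q(W_P)\le 1+q\,\theta_{n-2}^{d,q}$, where $W_P$ is a hypersurface of degree $d$ in $T_PX^n\cong\mathbb{P}^n$ with a point of multiplicity at least $2$ at $P$.

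To bound $N_q(W_P)$ I would project $W_P$ from $P$ onto $\mathbb{P}^{n-1}$: an $\mathbb{F}_q$-line of $\mathbb{P}^n$ through $P$ contributes $q$ points of $W_P\setminus\{P\}$ if it lies in $W_P$ (equivalently, in $X^n$) and at most $d-2$ points otherwise, since $P$ is at least a double point of $W_P$. Writing $Z_P\subseteq\mathbb{P}^{n-1}$ for the subvariety parametrising the $\mathbb{F}_q$-lines of $X^n$ through $P$, this gives
\[
N_q(W_P)\ \le\ 1+(d-2)\,N_q(\mathbb{P}^{n-1})+(q-d+2)\,N_q(Z_P),
\]
and a short computation shows that the right-hand side is $\le 1+q\,\theta_{n-2}^{d,q}$ precisely when $(q-d+2)\,N_q(Z_P)\le q\,N_q(\mathbb{P}^{n-2})-(d-2)\,N_q(\mathbb{P}^{(n-1)/2})$. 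Since $d\le q$ makes $q-d+2$ positive, the whole problem is reduced to an upper bound for $N_q(Z_P)$, i.e.\ for the number of $\mathbb{F}_q$-lines of $X^n$ through a suitably chosen point $P$.

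This last estimate is the heart of the matter, and the step I expect to be the main obstacle. For $P$ in general position $Z_P$ is cut out on $\mathbb{P}(T_PX^n)\cong\mathbb{P}^{n-1}$ by the forms $g_2,\dots,g_d$ ($g_i$ homogeneous of degree $i$) coming from the Taylor expansion of an equation of $X^n$ along the lines through $P$, hence is a complete intersection of the expected dimension $n-d$ (empty when $d>n$), and then $N_q(Z_P)$ lies comfortably within the required range by the elementary bounds for projective varieties together with $d\le q$ --- in fact strictly so, so such $X^n$ do not attain $\theta_n^{d,q}$. The difficulty is that over $\mathbb{F}_q$ one cannot simply pass to a general point: at special $P$ the variety $Z_P$ may jump in dimension, for instance where the projective second fundamental form of $X^n$ degenerates, a phenomenon that in positive characteristic may be drastic. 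Here one must use simultaneously the nonsingularity of $X^n$, the Homma--Kim inequality $k_{X^n}\le\lfloor n/2\rfloor=(n-1)/2$ (so that $Z_P$ contains no linear $\mathbb{P}^{(n-1)/2}$ and the cone $P*Z_P\subseteq X^n$ contains no $\mathbb{P}^{(n+1)/2}$), and once more the hypothesis $d\le q$ --- the one that breaks down for the $d=q+1$ example --- either to keep $N_q(Z_P)$ admissible for at least one $\mathbb{F}_q$-point $P$, or else to derive from the failure of this so many lines on $X^n$ as to contradict $k_{X^n}\le(n-1)/2$. In the complementary regime, where $N_q(Z_P)$ is too large for the crude estimate, one shows that $Z_P$ must then be a nonsingular hypersurface of dimension $n-2$ with $W_P=P*Z_P$ its cone over $P$ (this structure being forced by the nonsingularity and reflexivity of $X^n$, again a point needing care in positive characteristic); since $n-2$ is odd, one closes the induction by applying Theorem~\ref{thm1} or the lower-dimensional case of Theorem~\ref{thm2} to get $N_q(Z_P)\le\theta_{n-2}^{d,q}$, hence $N_q(W_P)\le 1+q\,\theta_{n-2}^{d,q}$ and $N_q(X^n)\le\theta_n^{d,q}$.

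For the equality statement I would run the inequalities backwards. If $N_q(X^n)=\theta_n^{d,q}$, then for the chosen $P$ every non-tangent line through $P$ meets $X^n$ in exactly $d$ rational points, $N_q(W_P)=1+q\,\theta_{n-2}^{d,q}$ holds, and --- the preceding estimates leaving no slack, and $X^n$ being nonsingular --- $W_P=X^n\cap T_PX^n$ is forced to be the cone $P*Y$ over $Y:=Z_P\subset\mathbb{P}^{n-1}$. Then $Y$ is a hypersurface of degree $d$ with $N_q(Y)=\theta_{n-2}^{d,q}$, and $Y$ is nonsingular because the singular locus of $P*Y$ --- the cone over $\operatorname{Sing}(Y)$ together with $P$ --- equals the contact locus of the hyperplane $T_PX^n$ with the nonsingular $X^n$, which cannot contain a positive-dimensional family through $P$; thus $\operatorname{Sing}(Y)=\emptyset$. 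This is the asserted characterisation, and the same scheme proves Theorem~\ref{thm1}, with the bound $N_q(Y)\le(d-1)q+1$ for plane curves of degree $d$ replacing the inductive hypothesis.
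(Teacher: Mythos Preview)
Your overall inductive scheme and your treatment of the cone case agree with the paper: set $n=2k+3$, handle the case where some tangent section $T_PX^n\cap X^n$ is a cone $P*Y$ via Lemma~\ref{lem-cone} plus the inductive hypothesis (this is the paper's \textit{Claim~5}), and otherwise bound the number of $\mathbb{F}_q$-lines of $X^n$ through each $\mathbb{F}_q$-point. The identity $\theta_n^{d,q}=q\,\theta_{n-2}^{d,q}+(d-1)q^n+1$ and the reduction to $N_q(T_PX^n\cap X^n)\le 1+q\,\theta_{n-2}^{d,q}$ are also correct.

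The gap is in the non-cone case. You reduce everything to the inequality
\[
(q-d+2)\,N_q(Z_P)\ \le\ q\,N_q(\mathbb{P}^{n-2})-(d-2)\,N_q(\mathbb{P}^{(n-1)/2}),
\]
admit that this bound on $N_q(Z_P)$ is ``the main obstacle'', and then only sketch a dichotomy (either $Z_P$ is a complete intersection of the expected dimension, or the tangent section is a cone) without proving it. The paper does prove an explicit estimate in the non-cone case, namely (\textit{Claim~6})
\[
N_q(Z_P)\ \le\ (d-1)\,q^{\,n-2}+N_q(\mathbb{P}^{(n-5)/2}),
\]
but this bound is \emph{not strong enough} for your one-step line-through-$P$ count when $d$ is close to $q$. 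For instance, with $n=5$ and $d=q=4$ your displayed inequality requires $N_q(Z_P)\le 149$, whereas \textit{Claim~6} only yields $N_q(Z_P)\le 193$. So even granting you the best available bound on $N_q(Z_P)$, your counting scheme does not close.

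What the paper does instead is a genuinely finer \emph{two-step} count: fix an $\mathbb{F}_q$-line $l\subset X^n$ (which exists once the cone case is excluded), apply \textit{Claim~6} at every $\mathbb{F}_q$-point of $l$ to bound the number $\Omega$ of $\mathbb{F}_q$-planes through $l$ whose section with $X^n$ splits completely into $d$ lines, and then estimate $N_q(X^n)$ by summing over all $\mathbb{F}_q$-planes through $l$. The gain comes from the fact that a non-split plane section contributes at most $(d-2)q+1$ points off $l$ rather than $(d-1)q$; this saving of roughly $q$ per plane, multiplied by $N_q(\mathbb{P}^{n-1})-\Omega$ planes, is exactly what your single projection from $P$ gives away. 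In short, the missing idea is to pass from lines through a point to planes through a line.
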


\smallskip

\noindent\textbf{Acknowledgment}. The author 
wishes to thank 
M. Datta for having drawn his attention
in April 2019 to the preprint in arXiv of \cite{D} which partially inspired and motivated 
this work. During the preparation of this paper, the author was partially supported by the Project VRID N. 219.015.023-INV.

\section{Notation and preliminary results}\label{sec1}

Let $X^n$ be a hypersurface in $\mathbb{P}^{n+1}$ of degree $d\geq
2$ and dimension $n\geq 3$ defined over a finite field
$\mathbb{F}_q$ of $q$ elements, where $q=p^r$ for some prime number
$p$ and an integer $r\in\mathbb{Z}_{\geq 1}$. If $W$ is an algebraic set in $\mathbb{P}^{n+1}$
defined by some polynomials $F_1,\dots,F_s\in\mathbb{F}_q[x_0,\dots,x_{n+1}]$, we write $W=V(F_1,\dots,F_s)$ and we denote by $W(\mathbb{F}_q)$ and $N_q(W)$
the set of $\mathbb{F}_q$-points of $W$ 
and the cardinality of $W(\mathbb{F}_q)$, 
respectively. Moreover, we denote by $k_W$ the Thas' invariant of $W$, i.e. the maximal dimension of an $\mathbb{F}_q$-linear subspace contained in $W$ (\cite{Th} and \cite[Definition 3.1]{HK7}).

Recall that for any $N\in\mathbb{Z}_{\geq 0}$ we have
$$N_q(\mathbb{P}^{N})=q^N+q^{N-1}+\dots +q+1\ .$$

Moreover, let us denote here by $V*Y$ the cone with vertex $V\cong\mathbb{P}^h$ for some $h\in\mathbb{Z}_{\geq 0}$ over an algebraic set $Y$ defined over $\mathbb{F}_q$ and by $\langle Z,W\rangle$ the smaller $\mathbb{F}_q$-linear subspace containing the algebraic sets $Z$ and $W$ defined over $\mathbb{F}_q$.
Finally, the nonsingularity of $X^n$ is normally 
(and also in this note) considered over the algebraic closure $\overline{\mathbb{F}}_q$ of $\mathbb{F}_q$.

\begin{rem}\label{rem2}
Let $X^n$ be a nonsingular hypersurface of degree $d\geq 2$ in 
$\mathbb{P}^{n+1}$ with $n\geq 3$ defined over $\mathbb{F}_q$. Then $k_{X^n}\leq\lfloor\frac{n}{2}\rfloor$ $($\cite[Lemma 2.1]{HK7}$)$. 
\end{rem}

Let us give here some preliminary results
which will be useful 
to prove Theorems \ref{thm1} and \ref{thm2} of the Introduction.

\begin{lem}\label{lem-cone}
Let $X^n$ be a nonsingular hypersurface of degree $d\geq2$ in $\mathbb{P}^{n+1}$ with $n\geq 3$ defined over $\mathbb{F}_q$. Suppose that there exists a point $P\in X^n(\mathbb{F}_q)$ such that 
$T_PX^n\cap X^n$ is a cone $P*Y$ over some hypersurface $Y$ in $\mathbb{P}^{n-1}$. 
Then $Y$ is nonsingular. 
\end{lem}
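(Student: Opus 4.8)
The plan is to work in a suitable affine chart and use the explicit structure of a cone to control partial derivatives. Choose coordinates so that $P=[1:0:\cdots:0]$ and the tangent hyperplane is $T_PX^n=V(x_{n+1})$. Then the defining polynomial $F$ of $X^n$ can be written, after dehomogenizing with respect to $x_0$, in the form $f=x_{n+1}\,g+h$, where $h=h(x_1,\dots,x_n)$ is the lowest-degree term structure forced by $P\in X^n$ and by smoothness at $P$ (so $h$ starts in degree $2$), and where $V(f,x_{n+1})$ is precisely the affine trace of $T_PX^n\cap X^n$. Since by hypothesis $T_PX^n\cap X^n=P*Y$ with $Y\subset\mathbb{P}^{n-1}=V(x_0,x_{n+1})$ a hypersurface, the polynomial cutting out this cone in the hyperplane $x_{n+1}=0$ is, up to the chart, a form $G(x_1,\dots,x_n)$ in the $n$ variables other than $x_0$, homogeneous of degree $d$: this is exactly the condition that $T_PX^n\cap X^n$ be a cone with vertex $P$. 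In other words $F(x_0,x_1,\dots,x_n,0)=G(x_1,\dots,x_n)$ with $G$ defining $Y$ in $\mathbb{P}^{n-1}$.

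Now I would prove the contrapositive: if $Y$ were singular at some point $Q\in\mathbb{P}^{n-1}(\overline{\mathbb{F}}_q)$, I would produce a singular point of $X^n$, contradicting the nonsingularity of $X^n$. The natural candidate is the point $\widetilde Q$ on the ruling line $\overline{PQ}\subset T_PX^n\cap X^n\subset X^n$. Concretely, with $Q=[0:q_1:\cdots:q_n]\in V(x_0,x_{n+1})$ and a general point of the line $\overline{PQ}$ written $[\lambda:q_1:\cdots:q_n:0]$, I compute the partial derivatives $\partial F/\partial x_j$ of the degree-$d$ form $F$ at such a point. For $j\in\{1,\dots,n\}$ the relation $F(x_0,x_1,\dots,x_n,0)=G(x_1,\dots,x_n)$ together with homogeneity of $G$ gives $\partial F/\partial x_j=\partial G/\partial x_j$ on $x_{n+1}=0$ up to a term involving $\partial F/\partial x_0$; since $G$ does not involve $x_0$ we get $\partial F/\partial x_0(x_0,x_1,\dots,x_n,0)=0$ identically, so the $x_0$-derivative vanishes along the whole hyperplane section, and the remaining derivatives $\partial F/\partial x_j(\lambda:q_1:\cdots:q_n:0)$ for $j=1,\dots,n$ equal $\partial G/\partial x_j(q_1,\dots,q_n)$, which all vanish by singularity of $Y$ at $Q$. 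Finally $\partial F/\partial x_{n+1}(\lambda:q_1:\cdots:q_n:0)$ I would have to handle separately — this is the term $g$ from the decomposition $f=x_{n+1}g+h$ evaluated at the point.

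The main obstacle is precisely this last derivative $\partial F/\partial x_{n+1}$: its vanishing is not automatic from the cone structure alone. The key observation that resolves it is that $\partial F/\partial x_{n+1}|_{x_{n+1}=0}$, as a form in $x_0,x_1,\dots,x_n$, restricted to the line $\overline{PQ}$, is a polynomial in $\lambda$ of degree $\le d-1$; if it did not vanish identically on the line it would have at most $d-1$ zeros, but I can instead argue that the line $\overline{PQ}$ meets $X^n$ with multiplicity $d$ at $P$ is impossible for $d\geq 2$ while the line lies in $X^n$, forcing a constraint, or — more cleanly — I use that $Q$ being a singular point of the hypersurface $Y$ of degree $d$ means $Q\in Y$, and then pick $\lambda$ so that $\widetilde Q=[\lambda:q_1:\cdots:q_n:0]$ makes $\partial F/\partial x_{n+1}$ vanish, which is possible because over $\overline{\mathbb{F}}_q$ this degree-$(\le d-1)$ polynomial in $\lambda$ has a root unless it is a nonzero constant, and it cannot be a nonzero constant since it must vanish at $P$ (as $P$ is a smooth point of $X^n$ lying on $T_PX^n$, the gradient of $F$ at $P$ is proportional to the $x_{n+1}$-direction is false — rather $\partial F/\partial x_{n+1}(P)\neq 0$). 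I will therefore instead invoke that $T_PX^n\cap X^n$ being a cone forces $g$ itself (the coefficient of $x_{n+1}$) to vanish on $Y$, hence on the whole cone, because otherwise $V(f,x_{n+1})=V(h,x_{n+1})$ would not be a cone with vertex $P$; granting this, $\partial F/\partial x_{n+1}$ vanishes at every point of the cone, and in particular at $\widetilde Q$, completing the production of a singular point of $X^n$ and the contradiction.
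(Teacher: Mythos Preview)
Your setup and the computation of $\partial F/\partial x_j$ for $j=0,1,\dots,n$ are correct and match the paper's argument. The issue is exactly where you locate it: controlling $\partial F/\partial x_{n+1}$ along the line $\overline{PQ}$.

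Your final move---claiming that the cone hypothesis forces the coefficient $g$ of $x_{n+1}$ to vanish on $Y$, hence on the whole cone---is wrong. The cone hypothesis says only that $F|_{x_{n+1}=0}$ does not involve $x_0$; it imposes no constraint on $g$ at all. Indeed $g(P)=\partial F/\partial x_{n+1}(P)\neq 0$ (since $T_PX^n=V(x_{n+1})$), so $g$ certainly does not vanish on the cone, which contains $P$. Concretely, $F=x_{n+1}(x_0^{d-1}+x_1^{d-1})+G(x_1,\dots,x_n)$ gives a cone $T_PX^n\cap X^n$ over $V(G)$ while $g=x_0^{d-1}+x_1^{d-1}$ is generically nonzero on it.

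The approach you abandoned is the correct one, and it is essentially what the paper does. The restriction of $\partial F/\partial x_{n+1}$ to $\overline{PQ}$, parametrized as $[\lambda:q_1:\cdots:q_n:0]$, is a polynomial in $\lambda$ of degree \emph{exactly} $d-1$: its leading coefficient is the coefficient of $x_0^{d-1}$ in $g$, which equals $g(1,0,\dots,0)=\partial F/\partial x_{n+1}(P)\neq 0$. Since $d-1\geq 1$, this polynomial is not constant and hence has a root $\lambda_0\in\overline{\mathbb{F}}_q$; the point $[\lambda_0:q_1:\cdots:q_n:0]$ is then singular on $X^n$, giving the contradiction. (The paper parametrizes the line as $(1:t:0:\cdots:0)$ instead and splits into two cases---$g(1,t,0,\dots,0)$ nonconstant, giving a root directly, or constant, which forces $g(0,1,0,\dots,0)=0$ so that $Q$ itself is singular---but the mechanism is the same.) Your misstep was reading $\partial F/\partial x_{n+1}(P)\neq 0$ as an obstruction; in your parametrization $P$ sits at $\lambda=\infty$, and that nonvanishing is precisely what pins the degree at $d-1\geq 1$ and guarantees a finite root.
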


\begin{proof}
Assume that
$Y$ contains a singular point $Q$. After a projectivity, we can write
$$P:=(1:0:\dots :0),\ T_PX^n:= \{ x_{n+1}=0 \},\ Q:=(0:1:0:\dots :0)\ ,$$
$X^n:=V(F)$ for some homogeneous polynomial $F=x_{n+1}G+H\in\overline{\mathbb{F}}_q[x_0,\dots,x_{n+1}]$,
with $G\in\overline{\mathbb{F}}_q[x_0,\dots,x_{n+1}]$, $H\in \overline{\mathbb{F}}_q[x_1,\dots,x_{n}]$ such that $G(1,0,\dots,0)\neq 0, H(1,0,\dots,0)=0$, and $Y=V(x_0,x_{n+1},H)$.  
Since $Q$ is a singular point of $Y$, we have $\frac{\partial H}{\partial x_i}(t,0,\dots,0)=0$ for any $t\in\overline{\mathbb{F}}_q$ and for
every $i=1,\dots, n$. Moreover, note that $P_{a,b}:=(a:b:0:\dots:0)\in X^n$ for any $(a:b)\in\mathbb{P}^1(\overline{\mathbb{F}}_q)$ and 
$$\frac{\partial F}{\partial x_0}=x_{n+1}\frac{\partial G}{\partial x_0}, \ \frac{\partial F}{\partial x_i}=x_{n+1}\frac{\partial G}{\partial x_i}+\frac{\partial H}{\partial x_i}, \ \frac{\partial F}{\partial x_{n+1}}=G+x_{n+1}\frac{\partial G}{\partial x_{n+1}}\ ,$$
for every $i=1,\dots,n$. Therefore,  
we deduce that $\frac{\partial F}{\partial x_j}(P_{a,b})=0$ for $j=0,\dots,n$, and $\frac{\partial F}{\partial x_{n+1}}(P_{a,b})=G(a,b,0,\dots,0)$. Since $G(1,0,\dots,0)\neq 0$, we see that $$g(t):=G(1,t,0,\dots,0)\in\overline{\mathbb{F}}_q[t]$$ is not the null polynomial. Suppose that $g(t)$ is not a constant. Then there exists $b_0\in\overline{\mathbb{F}}_q$ such that $g(b_0)=0$. Thus $P_{1,b_0}$ is a singular point of $X^n$, which is a contradiction. Therefore, $g(t)$ is a non-zero constant polynomial, that is, $g(t)=c\in\overline{\mathbb{F}}_q\setminus\{0\}$ for every $t\in\overline{\mathbb{F}}_q$. 

Thus we deduce that $$G(x_0,x_1,\dots,x_{n+1})=cx_0^{d-1}+\sum_{i=2}^{n+1}x_iG_i(x_0,\dots,x_{n+1}),$$ where $\deg G_i=d-2$. Hence 
$\frac{\partial F}{\partial x_j}(P_{0,1})=0$ for $j=0,\dots,n$, and $\frac{\partial F}{\partial x_{n+1}}(P_{0,1})=0$, i.e. $P_{0,1}$ is a singular point of $X^n$, but this gives again a contradiction.
\end{proof}

\begin{lem}\label{lem-irred}
Let $X^n$ be a nonsingular hypersurface of degree $d\geq 2$ in 
$\mathbb{P}^{n+1}$ defined over $\mathbb{F}_q$ and let $P$ be an $\mathbb{F}_q$-point of $X^n$. If $n\geq 3$ is an odd integer, then either $T_PX^n\cap X^n$ is irreducible over $\mathbb{F}_q$, or $N_q(X^n)\leq\theta_n^{d,q}$, with equality only if $n=3$ and $d=q+1$. 
\end{lem}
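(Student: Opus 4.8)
The plan is to analyze the possible ways in which the hyperplane section $W := T_PX^n \cap X^n$, a degree-$d$ hypersurface in $T_PX^n \cong \mathbb{P}^n$, can fail to be $\mathbb{F}_q$-irreducible, and to show that in each such case the point count $N_q(X^n)$ is controlled. First I would record the basic structural fact: since $X^n$ is nonsingular and $P \in X^n(\mathbb{F}_q)$, the tangent hyperplane $T_PX^n$ meets $X^n$ in a hypersurface $W$ that is singular at $P$ (every hyperplane section through a point of a hypersurface which contains the tangent hyperplane is singular there); moreover $W$ cannot itself be a cone with vertex a linear space of too large a dimension, and — by the genericity of nonsingularity — the singular locus of $W$ is constrained. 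If $W$ is reducible over $\mathbb{F}_q$, write $W = W_1 \cup W_2$ with $W_1$ defined over $\mathbb{F}_q$ of degree $d_1$, $1 \le d_1 \le d-1$, and $W_2$ of degree $d_2 = d - d_1$ (possibly $W_1$ is a single $\mathbb{F}_q$-hyperplane, or a union of Galois-conjugate components whose product is defined over $\mathbb{F}_q$).

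The key step is then a counting estimate. By inclusion–exclusion, $N_q(W) = N_q(W_1) + N_q(W_2) - N_q(W_1 \cap W_2)$, where $W_1 \cap W_2$ is an algebraic set contained in a hypersurface of $\mathbb{P}^n$ of degree $\le \min(d_1,d_2)$ — but more importantly $W_1 \cap W_2$ is a proper closed subset of each $W_i$ of codimension $\ge 1$ in $\mathbb{P}^n$, i.e. of dimension $\le n-2$. Using the elementary bound of \cite{HK4} (the bound \eqref{k-bound} with $k = n-1$) applied to each $W_i \subset \mathbb{P}^n$, namely $N_q(W_i) \le (d_i - 1)q^{n-1} + N_q(\mathbb{P}^{n-1})$ for $d_i \ge 2$ and $N_q(W_i) = N_q(\mathbb{P}^{n-1})$ when $d_i = 1$, together with a Serre–Lachaud type bound $N_q(W_1 \cap W_2) \le c\, q^{n-2}$ for the intersection, I would sum these up to get a bound on $N_q(W)$ of the shape $(d-1)q^{n-1} + (\text{lower order})$, with the lower-order correction being \emph{negative} unless $d_1$ or $d_2$ equals $1$. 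Then I feed this into the main point-counting inequality that relates $N_q(X^n)$ to $N_q(W)$: projecting $X^n$ from $P$ (or using the standard two-line argument that each $\mathbb{F}_q$-line through $P$ meets $X^n$ in at most $d-1$ further points unless it lies in $T_PX^n$), one gets
\begin{equation*}
N_q(X^n) \le N_q(W) + (d-1)\bigl(q^{n} + q^{n-1} + \dots + q\bigr) \cdot \tfrac{1}{?}
\end{equation*}
— more precisely the inequality used by Homma–Kim, $N_q(X^n) \le (d-1)(N_q(\mathbb{P}^n) - N_q(\langle P, W\rangle\text{-part})) + N_q(W)$, which after substitution yields $N_q(X^n) \le \theta_n^{d,q}$, with equality forcing $n = 3$ and the degree-$1$ component to be a hyperplane, i.e. $d_1 = 1$, and then a direct analysis of the $d_1 = 1$ case pins down $d = q+1$ (a hyperplane of $T_PX^n$ contained in the nonsingular $X^3$ is an $\mathbb{F}_q$-plane $\mathbb{P}^2 \subset X^3$, and $k_{X^3} \ge 2 = \lfloor 3/2 \rfloor + 1$ would contradict Remark \ref{rem2} unless the cone structure degenerates, which by the characterization of \cite{T2}, \cite{T1} happens precisely for $d = q+1$).

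The main obstacle I expect is the bookkeeping in the reducible case when one component $W_1$ is itself a \emph{cone} (for instance a hyperplane, the $d_1=1$ case) through $P$: here the naive inclusion–exclusion is wasteful because $W_1$ and $W_2$ share a large intersection, and one must instead argue that either $W_1$ being an $\mathbb{F}_q$-hyperplane forces a linear subspace of dimension $\ge \frac{n-1}{2}+1$ inside $X^n$ (contradicting $k_{X^n} \le \lfloor n/2 \rfloor$ from Remark \ref{rem2}) when $n \ge 5$, thereby eliminating this case entirely for $n\geq 5$, or — when $n = 3$ — a finer count, valid only up to the exceptional equality, shows $N_q(X^3) \le \theta_3^{d,q}$ with equality iff $d = q+1$, consistent with Example \ref{example}. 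Handling the sub-case where $W_1$ is a product of two conjugate linear forms (so $W_1$ is an $\mathbb{F}_q$-irreducible quadric that is reducible over $\overline{\mathbb{F}}_q$) requires the same Thas-invariant obstruction and is where I would be most careful.
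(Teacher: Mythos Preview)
Your overall strategy---decompose $W=T_PX^n\cap X^n=V_1\cup V_2$ and bound each piece---matches the paper's, but the execution has a genuine gap. The bound you invoke on each component, $N_q(W_i)\le (d_i-1)q^{n-1}+N_q(\mathbb{P}^{n-1})$, is too weak: already for $n=3$, $d=4$, $q=5$ (so $d_1=d_2=2$) it gives $(d-1)q^3+dq^2+2q+2=487$, which exceeds $\theta_3^{4,5}=456$ by $q^2+q+1$. The missing idea is that $V_j\subset X^n$ forces $k_{V_j}\le k_{X^n}\le\lfloor n/2\rfloor=(n-1)/2$ by Remark~\ref{rem2}, so one may apply \eqref{k-bound} to each $V_j$ with $k=k_{X^n}$ rather than the crude maximal value; since the right-hand side of \eqref{k-bound} is nondecreasing in $k$ for $d_j\le q+1$, this yields $N_q(V_j)\le (d_j-1)q^{(n-1)/2}N_q(\mathbb{P}^{(n-1)/2})+N_q(\mathbb{P}^{(n-1)/2})$, and summing this with $(d-1)q^n$ does stay below $\theta_n^{d,q}$ for $n\ge 5$. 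You invoke Remark~\ref{rem2} only to exclude $d_i=1$, but it is needed in full strength to control the general component.

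Two smaller points. First, your inclusion--exclusion runs the wrong way: an \emph{upper} bound on $N_q(W_1\cap W_2)$ cannot help upper-bound $N_q(W_1\cup W_2)$. Second, your equality analysis is off. The case $d_1=1$ is impossible for every $n\ge 3$ (a $\mathbb{P}^{n-1}\subset X^n$ already contradicts $k_{X^n}\le(n-1)/2$), so equality at $n=3$, $d=q+1$ does not arise from a linear component. In the paper it comes instead from the computation $N_q(X^3)\le\theta_3^{d,q}+(d-1)q-q^2$, where the excess vanishes exactly when $d=q+1$; obtaining this sharp constant requires subtracting $1$ for the overcount at $P$, which the paper justifies via the equality characterization of \cite{HK5,T2}: if both $N_q(V_j)$ attain their maxima then each $V_j$ is nonsingular, so the singular point $P$ of $V_1\cup V_2$ must lie in $V_1\cap V_2$.
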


\begin{proof}
Note that if $d\geq q+2$, then $N_q(X^n)<\theta_n^{d,q}$. So, we can assume that $d\leq q+1$ and $T_PX^n\cap X^n=V_1\cup V_2$ is reducible over $\mathbb{F}_q$. Since $k_{V_j}\leq k_{X^n}$ and $d_j:=\deg V_j\leq q+1$ for $j=1,2$, by \eqref{k-bound} we have
$$N_q(V_j)\leq (d_j-1)q^{k_{X^n}}N_q(\mathbb{P}^{n-1-k_{X^n}})+N_q(\mathbb{P}^{k_{X^n}})=:\delta_j^{(n)}\ .$$

Suppose $n=3$ and, after renaming, assume that $P\in V_1$. Observe that $N_q(V_j)=\delta_j^{(3)}$ implies that $V_j$ is a nonsingular surface in $\mathbb{P}^3$ by \cite{HK5, T2} and that if $N_q(V_j)=\delta_j^{(3)}$ for $j=1,2$, then $P\in V_1\cap V_2$ because $P$ is a singular point of $V_1\cup V_2$. So, in any case, we get
$$N_q(X^3)\leq (d-1)q^3+\delta_1^{(3)}+\delta_2^{(3)}-1\ ,$$
and since $k_{X^3}\leq 1$ by Remark \ref{rem2} and $d=d_1+d_2$, we conclude that
$$N_q(X^3)\leq (d-1)q^3+(d-1)qN_q(\mathbb{P}^1)-qN_q(\mathbb{P}^1)+2N_q(\mathbb{P}^1)-1=$$
$$=(d-1)q^3+(d-1)q^2+(d-1)q-q(q+1)+2(q+1)-1=$$
$$=\theta_3^{d,q}+(d-1)q-q^2\leq \theta_3^{d,q}\ ,$$
with equality only if $d=q+1$.

Assume now $n\geq 5$. Since $k_{X^n}\leq \frac{n-1}{2}$ by Remark \ref{rem2}, we obtain that
$$N_q(X^n)\leq (d-1)q^n+N_q(V_1)+N_q(V_2)\leq (d-1)q^n+\delta_1^{(n)}+\delta_2^{(n)}\leq$$
$$\leq (d-1)q^n+dq^{\frac{n-1}{2}}N_q(\mathbb{P}^{\frac{n-1}{2}})-2q^{\frac{n-1}{2}}N_q(\mathbb{P}^{\frac{n-1}{2}})+2N_q(\mathbb{P}^{\frac{n-1}{2}})=$$
$$=(d-1)q^n+(d-1)(q^{n-1}+\dots+q^{\frac{n-1}{2}})
-(q^{n-1}+\dots+q^{\frac{n-1}{2}})+2N_q(\mathbb{P}^{\frac{n-1}{2}})=$$
$$=\theta_n^{d,q}+(d-1)q^{\frac{n-1}{2}}-(q^{n-1}+\dots+q^{\frac{n-1}{2}})+N_q(\mathbb{P}^{\frac{n-1}{2}})\leq$$
$$\leq \theta_n^{d,q}+q^{\frac{n+1}{2}}+N_q(\mathbb{P}^{\frac{n-1}{2}})-(q^{n-1}+\dots+q^{\frac{n-1}{2}})=$$
$$=\theta_n^{d,q}+N_q(\mathbb{P}^{\frac{n-3}{2}})-(q^{n-1}+\dots+q^{\frac{n+3}{2}})<\theta_n^{d,q}\ ,$$ because $n-1\geq\frac{n+3}{2}>\frac{n-3}{2}$ for $n\geq 5$.
\end{proof}

\section{Proof of Theorem \ref{thm1}}\label{sec2} 

In this Section, we will first consider the cases $d\leq q$ (Subsection \ref{sub1}) and then we will treat the remaining cases $d\geq q+1$ (Subsection \ref{sub2}). 

Let us recall that the case $n=3$ was already considered by
M. Datta in \cite{D} under the assumption that $d\leq q$ and
$(d,q)\neq (4,4)$. So, for this reason and for the convenience of the reader,
to prove Theorem \ref{thm1} we will first revisit completely the cases $d\leq q$.

\medskip

First of all, let us prove a preliminary result which in fact holds for any $d\geq 2$.

\begin{prop}\label{n=3 cone}
Let $X^3$ be a nonsingular hypersurface of degree $d\geq 2$ in $\mathbb{P}^{4}$ defined over $\mathbb{F}_q$. Suppose that there exists a point $P\in X^3(\mathbb{F}_q)$ such that 
$T_pX^3\cap X^3$ is a cone $P*\gamma$ over a plane curve $\gamma$. Then $$N_q(X^3)\leq\theta_3^{d,q}\ ,$$
and the equality is attained only if $d\leq q+1$ and $\gamma$ is a nonsingular plane curve defined over $\mathbb{F}_q$ with $N_q(\gamma)=(d-1)q+1$.
\end{prop}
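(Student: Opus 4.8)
The plan is to bound $N_q(X^3)$ by decomposing the count of $\mathbb{F}_q$-points of $X^3$ according to the hyperplane $T_PX^3$: points lying on $T_PX^3\cap X^3$, and points lying outside it. Since $T_PX^3\cap X^3 = P*\gamma$ is a cone over a plane curve $\gamma$ in a plane $\Pi\cong\mathbb{P}^2$ not through $P$, its $\mathbb{F}_q$-points are exactly the union of the lines $\overline{P,R}$ for $R\in\gamma(\mathbb{F}_q)$; these lines pairwise meet only at $P$, so $N_q(P*\gamma) = q\cdot N_q(\gamma) + 1$. For the points of $X^3$ off the tangent hyperplane, I would intersect $X^3$ with the $q^3+q^2+q$ (that is, $N_q(\mathbb{P}^4) - N_q(T_PX^3) = q^3+q^2+q+1$ minus ... let me recount) hyperplanes through a fixed line, or more cleanly use the standard trick: every $\mathbb{F}_q$-point of $\mathbb{P}^4$ outside $T_PX^3$ lies on a unique line through $P$ meeting $X^3$; but the sharper route is to fix the plane $\Pi = V(x_0, x_{n+1})$ and count via lines through $P$. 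Each line $\ell$ through $P$ not contained in $T_PX^3$ meets $X^3$ in at most $d$ points but, since $P\in\ell\cap X^3$ with $\ell\not\subset T_PX^3$ forcing $\ell$ to meet $X^3$ at $P$ with multiplicity one... actually the cleanest count is to project from $P$: lines through $P$ are parametrized by $\mathbb{P}^3$, those inside $T_PX^3$ form a $\mathbb{P}^2$, and for a line $\ell\not\subset T_PX^3$ one has $|\ell\cap X^3| \le d$, with $P$ always one of these points.

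So the key inequality to set up is
$$N_q(X^3) \le \big(q\,N_q(\gamma)+1\big) + (d-1)\big(N_q(\mathbb{P}^3) - N_q(\mathbb{P}^2)\big),$$
where the second term counts, over the $N_q(\mathbb{P}^3)-N_q(\mathbb{P}^2) = q^3$ lines through $P$ not lying in $T_PX^3$, at most $d-1$ points of $X^3$ distinct from $P$ on each (since $P$ is already counted and each such line meets $X^3$ in at most $d$ points). Then $q\,N_q(\gamma) + 1 + (d-1)q^3 \le q\big((d-1)q+1\big) + 1 + (d-1)q^3$ using the trivial bound $N_q(\gamma)\le (d-1)q+1$ for a plane curve of degree $d$ (this is the Homma--Kim / Sziklai-type bound, or just $N_q(\gamma)\le dq - q + 1$; for $d\le q+1$ one should invoke the appropriate sharp bound, for $d\ge q+2$ use $N_q(\gamma)\le \binom{d}{2}q+\dots$ carefully — but actually $\gamma$ here is the base of a cone cut out on a nonsingular threefold, so one can say more). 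Plugging in, the right-hand side equals $(d-1)q^3 + (d-1)q^2 + q + 1 = (d-1)q^3 + (d-1)q^2 + (d-1)q + 1 - (d-2)q = \theta_3^{d,q} - (d-2)q \le \theta_3^{d,q}$, with the last inequality strict unless $d=2$; for $d=2$ equality forces $N_q(\gamma) = q+1 = (d-1)q+1$, which is consistent. One must double-check the bookkeeping so that the equality case reads precisely "$d\le q+1$ and $\gamma$ nonsingular with $N_q(\gamma)=(d-1)q+1$"; the nonsingularity of $\gamma$ would follow from Lemma \ref{lem-cone} applied with $n=3$.

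The main obstacle I anticipate is making the count of points off $T_PX^3$ genuinely sharp, i.e. controlling lines through $P$ that could meet $X^3$ in $d$ points \emph{including} $P$ with the "extra" point causing a loss, and more importantly handling the regime $d\ge q+1$ where the naive plane-curve bound $N_q(\gamma)\le (d-1)q+1$ may fail (a plane curve of degree $d > q+1$ can have more than $(d-1)q+1$ points). In that range one instead shows directly $N_q(X^3) < \theta_3^{d,q}$: for $d\ge q+2$ this is Lemma \ref{lem-irred}'s opening observation $N_q(X^3)<\theta_3^{d,q}$, and for $d=q+1$ one argues the cone structure forces $\gamma$ to be a plane curve of degree $q+1$ with at most $(q+1)q - q + 1 = q^2+1 = (d-1)q+1$ points, attaining equality exactly for the Hermitian-type or Example \ref{example} configuration. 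Thus I would split the proof as: (i) reduce to $d\le q+1$; (ii) establish the displayed inequality via the line-count through $P$; (iii) invoke the sharp plane-curve point bound together with Lemma \ref{lem-cone} to extract the equality characterization.
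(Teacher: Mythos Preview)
Your overall strategy --- count the points on the cone as $N_q(P*\gamma)=qN_q(\gamma)+1$, add at most $(d-1)q^3$ points coming from the $q^3$ lines through $P$ not contained in $T_PX^3$, and then bound $N_q(\gamma)$ by the Sziklai/Homma--Kim plane-curve bound $(d-1)q+1$ --- is exactly the paper's approach. There are, however, two problems.

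First, a bookkeeping slip: $\theta_3^{d,q}=(d-1)q^2N_q(\mathbb{P}^1)+N_q(\mathbb{P}^1)=(d-1)q^3+(d-1)q^2+q+1$, so your right-hand side is \emph{equal} to $\theta_3^{d,q}$, not $\theta_3^{d,q}-(d-2)q$. Once corrected, the equality case reads $N_q(\gamma)=(d-1)q+1$ for all $d$ (not only $d=2$), and the constraint $d\le q+1$ follows because for $d\ge q+2$ one has $(d-1)q+1\ge N_q(\mathbb{P}^2)$, impossible for a curve.

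Second, and this is the genuine gap: the bound $N_q(\gamma)\le(d-1)q+1$ for a nonsingular plane curve (which is what Lemma~\ref{lem-cone} gives you) has exactly one exception, namely $(d,q)=(4,4)$, where a curve with $N_q(\gamma)=(d-1)q+2=14$ exists. You flag the range $d\ge q+1$ as the potential trouble spot, but the real obstacle is this isolated case with $d=q=4$, which your proposal does not address. The paper devotes the bulk of its proof of this proposition (Claim~1 and the subsequent careful count of planes through a line $l\subset P*\gamma$) precisely to showing that in this situation $N_q(X^3)<\theta_3^{4,4}=245$. The argument is geometric and not routine: one first shows that no other $\mathbb{F}_4$-point $Q\neq P$ on $l$ can also have $T_QX^3\cap X^3$ a cone, and then uses this together with the bound of \cite{D} on $|\Omega_l(Q)|$ to sharpen the plane count. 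Without a separate treatment of $(d,q)=(4,4)$, the proof is incomplete.
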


\begin{proof}
By Lemma \ref{lem-cone} we know that $\gamma$ is a nonsingular plane curve defined over $\mathbb{F}_q$, hence without $\mathbb{F}_q$-lines. Then from \cite{HK3} it follows that either 

\smallskip

(i) $N_q(\gamma)\leq (d-1)q+1$, or 

\smallskip

(ii) $N_q(\gamma)=(d-1)q+2$ with $(d,q)=(4,4)$. 

\smallskip

\noindent In case (i), we get 
$$N_q(X^3)\leq (d-1)q^3+qN_q(\gamma)+1\leq (d-1)q^3+(d-1)q^2+q+1\ ,$$ that is $N_q(X^3)\leq\theta_3^{d,q}$, and if equality holds, then $N_q(\gamma)=(d-1)q+1$. 

\smallskip

Now, suppose we are in case (ii) with $d=q=4$. 
Consider an $\mathbb{F}_4$-line $l=\langle P,A\rangle$ passing through $P$ and an $\mathbb{F}_4$-point $A\in\gamma$.

\bigskip

\noindent\textit{Claim $1$}. $T_QX^3\cap X^3$ is not a cone for every  $Q\in l(\mathbb{F}_4)\setminus\{P\}$.

\medskip

\noindent\textit{Proof.} Suppose there exists an $\mathbb{F}_4$-point $Q\in l$ such that $Q\neq P$ and $T_QX^3\cap X^3$ is a cone $Q*\gamma'$ over a (nonsingular) plane curve $\gamma'$. Note that $T_QX^3\neq T_PX^3$, otherwise $T_QX^3\cap X^3=T_PX^3\cap X^3$ would not be singular at $Q$. 
Define $\mathbb{P}_{QP}^2:=T_Q X^3\cap T_P X^3$ and observe that $l\subset\mathbb{P}_{QP}^2$. Let $\mathbb{P}_{P}^2$ be the $\mathbb{F}_4$-plane containing $\gamma$ and consider the $\mathbb{F}_4$-line $l':=\mathbb{P}_{QP}^2\cap \mathbb{P}_{P}^2$. Then $A\in l'\cap \gamma$, i.e. $l'$ is a secant $\mathbb{F}_4$-line of $\gamma$. Since $N_q(\gamma)=(d-1)q+2=14$, we deduce that there must be another $\mathbb{F}_4$-point $B\in l'\cap \gamma\subset T_Q X^3\cap X^3$ distinct from $A$ and $Q$. Thus there is an $\mathbb{F}_4$-line $l''$ passing through $Q$ and $B$ such that $l''\subset T_P X^3\cap X^3=P*\gamma$. Since $d=q$, this shows that $P*\gamma$ contains an $\mathbb{F}_4$-plane passing through $P$ and $l''$, which is impossible because $X^3$ is nonsingular. \hfill{Q.E.D.}  

\bigskip

By \textit{Claim} $1$, we know that $T_Q X^3\cap X^3$ is not a cone for every $Q\in l(\mathbb{F}_4)$ such that $Q\neq P$. Take an $\mathbb{F}_4$-plane $\Pi\nsubseteq T_P X^3$ such that $l\cap\Pi=\emptyset$ and $\Pi\cap\mathbb{P}_P^2=L$ is an $\mathbb{F}_4$-line such that $A\notin L$, where $\mathbb{P}_P^2$ is the $\mathbb{F}_4$-plane containing $\gamma$. Denote by $\mathbb{P}_{x,l}^2$ the $\mathbb{F}_4$-plane passing through $x\in\Pi(\mathbb{F}_4)$ and containing $l$. Then
$$N_4(X^3)\leq N_4(P*\gamma)+\sum_{x\in (\Pi\setminus L)(\mathbb{F}_4)}N_4(\mathbb{P}_{x,l}^2\cap X^3)\ .$$
Note that $\mathbb{P}_{x,l}^2\cap X^3$ does not contain $\mathbb{F}_4$-lines distinct from $l$ and passing through $P$ for every $x\in (\Pi\setminus L)(\mathbb{F}_4)$. Moreover, observe that $N_4(\Pi\setminus L)=4^2=16$. So, if $\Omega_l(Q)$ is the set of all $\mathbb{F}_q$-planes containing $l$ and such that their intersection with $X^3$ is a union of $d$ distinct lines defined over $\mathbb{F}_q$ and passing through $Q\in l(\mathbb{F}_q)$ (here $d=q=4$), by \textit{Claim} $1$ and \cite[Theorem 3.1]{D} we see that $|\Omega_l(Q)|\leq \frac{d(d-1)-1}{d-1}=\frac{11}{3}$, i.e. $|\Omega_l(Q)|\leq 3$ for every $Q\in l(\mathbb{F}_4)\setminus\{P\}$. Define 
$$\overline{\Omega}(l):=\bigcup_{Q\in l(\mathbb{F}_4)\setminus\{P\}}\Omega_l(Q)\ ,\quad \overline{\mathcal{B}}(l):=\left\{\mathbb{P}_{x,l}^2\ :\ x\in (\Pi\setminus L)(\mathbb{F}_4)\right\} ,$$
$$\overline{\sigma}(l):=\left\{ \mathbb{P}_{x,l}^2\in\overline{\mathcal{B}}(l)\ |\ \mathbb{P}_{x,l}^2\cap X^3\ \mathrm{is\ a\ union\ of\ } 4\ \mathrm{lines\ defined\ over\ } \mathbb{F}_4 \right\} .$$
Recalling that if $\mathbb{P}_{x,l}^2\in \overline{\sigma}(l)$, then $\mathbb{P}_{x,l}^2\cap X^3$ does not contain an $\mathbb{F}_4$-line distinct from $l$ and passing through $P$, we get $|\overline{\Omega}(l)|\leq (d-1)q=12$ and $$|\overline{\sigma}(l)|\leq\frac{[(d-1)d-1]q}{d-1}=\frac{44}{3}\ ,$$ 
i.e. $|\overline{\sigma}(l)|\leq 14$. Note that 
$|\overline{\mathcal{B}}(l)|=N_4(\Pi\setminus L)=16$. Put $r:=|\overline{\mathcal{B}}(l)\setminus
\overline{\Omega}(l)|, s:=|\overline{\mathcal{B}}(l)\setminus \overline{\sigma}(l)|$ with $s\leq r$ and observe that 
$$-r=-q^2+|\overline{\Omega}(l)|\leq -16+12=-4\ ,$$ 
$$-s=-q^2+|\overline{\sigma}(l)|\leq -16+14=-2\ .$$ Thus we have
$$N_q(X^3)\leq [(d-1)q+2]q+1+|\overline{\Omega}(l)|(d-1)q+$$
$$+|\overline{\sigma}(l)\setminus\overline{\Omega}(l)|\cdot [(d-1)q-(d-2)]+|\overline{\mathcal{B}}(l)\setminus\overline{\sigma}(l)|\cdot [(d-2)q+1]=$$
$$=57+12(16-r)+10(r-s)+9s=249-2r-s\leq 239\ ,$$
that is, $N_q(X^3)<\theta_3^{4,4}=245$.
\end{proof}

\medskip

The next example, 
the nonsingular parabolic quadric hypersurface in $\mathbb{P}^4$ and the nonsingular Hermitian hypersurface in $\mathbb{P}^4$, 
show that the equality in Proposition \ref{n=3 cone} is reached at least for the cases $d=2,\sqrt{q}+1$ and $q+1$.

\begin{ex}\label{example}
Consider the following polynomial $F\in\mathbb{F}_2[x_0,\dots,x_4]:$
$$F:= x_0^2x_1 + x_0x_1^2 + x_0x_1x_2 + 
x_0x_2^2 + x_1x_2^2 + x_2^3 + x_0x_2x_3 + 
x_2^2x_3 +$$ 
$$+ x_0^2x_4 + x_0x_1x_4 + x_1x_2x_4 + 
x_0x_3x_4 + x_2x_3x_4 + x_3^2x_4 + x_2x_4^2 + x_3x_4^2\ .$$
Then $X^3=V(F)\subset\mathbb{P}^4$ is a nonsingular hypersurface of degree $3$ over $\mathbb{F}_2$ with $N_q(X^3)=27=\theta_3^{3,2}$.
\end{ex}

\smallskip
 
\subsection{Cases $d\leq q$.}\label{sub1} By Proposition \ref{n=3 cone} we can assume, without loss of generality, that
$S_x:=T_xX\cap X^3$ is not a cone for any $\mathbb{F}_q$-point $x\in X^3$. Moreover, from \cite[Proposition $5$]{T1} we deduce that there exists at least an 
$\mathbb{F}_q$-line $l$ in $X^3$. Let $P\in l$ be an $\mathbb{F}_q$-point and note that $l\subset S_x$ for any $\mathbb{F}_q$-point $x\in l$. 

Denote by $\delta_x$ the number of $\mathbb{F}_q$-lines contained in $X^3$ and passing through $x\in l(\mathbb{F}_q)$. As in \cite{D}, define $\Omega (l):=\bigcup_{Q\in l(\mathbb{F}_q)}\Omega_l(Q)$ and let $\sigma(l)$ be the set of all $\mathbb{F}_q$-planes containing $l$ and such that their intersection with $X^3$ is a union of $d$ $\mathbb{F}_q$-lines. 

\medskip

\noindent\textit{Claim $2$}. If $d\leq q$, then we have $|\Omega (l)|\leq q^2-1$ and $|\sigma (l)|\leq q^2+q-2$.

\smallskip

\noindent\textit{Proof.} Since $S_x$ is not a cone for every $\mathbb{F}_q$-point $x\in X$, from Lemma \ref{lem-irred} and \cite[Theorem 3.1]{D} we deduce that $\delta_x\leq (d-1)d\leq (d-1)q$ and hence
$$|\Omega_l(x)|\leq \frac{(d-1)q-1}{d-1}=q-\frac{1}{d-1}<q\ ,$$ i.e.
$|\Omega_l(x)|\leq q-1$ for every $x\in l(\mathbb{F}_q)$.
Then we obtain that $$|\Omega (l)|\leq (q-1)(q+1)=q^2-1$$ and 
$$|\sigma (l)|\leq \frac{[(d-1)q-1](q+1)}{d-1}=
\frac{(d-1)q(q+1)-(q+1)}{d-1}=$$
$$=q(q+1)-\frac{q+1}{d-1}<q(q+1)-1\ ,$$
i.e. $|\sigma (l)|\leq q(q+1)-2=q^2+q-2$. \hfill{Q.E.D.}

\medskip

Note that
$$N_q(X^3)\leq N_q(l)+|\Omega (l)|\cdot (d-1)q+|\sigma (l)\setminus\Omega (l)|\cdot [(d-1)q-(d-2)]+$$
$$+[q^2+q+1-|\sigma (l)|]\cdot [(d-2)q+1]\ .$$
Setting $r:=q^2+q+1-|\Omega (l)|$ and $s:=q^2+q+1-|\sigma (l)|$, by \textit{Claim $2$} we see that $r\geq q+2$, $s\geq 3$ and this gives
$$N_q(X^3)\leq (q+1)+(q^2+q+1-r)(d-1)q+(r-s)[(d-1)q-(d-2)]+$$
$$+s[(d-2)q+1]=\theta_3^{d,q}+(1-r)(d-1)q+(r-s)[(d-1)q-(d-2)]+$$
$$+s[(d-2)q+1]=\theta_3^{d,q}+(d-1)q-r(d-2)-s(q-d+1)\leq \theta_3^{d,q}+$$
$$+(d-1)q-(q+2)(d-2)-3(q-d+1)=\theta_3^{d,q}+dq-q-dq+$$
$$+2q-2d+4-3q+3d-3=\theta_3^{d,q}-2q+d+1\leq \theta_3^{d,q}-q+1
<\theta_3^{d,q}\ .$$

\medskip

These facts, together with Proposition \ref{n=3 cone}, prove Theorem \ref{thm1} 
for $d\leq q$.

\subsection{Cases $d\geq q+1$.}\label{sub2} If $d\geq q+2$, then $N_q(X^3)<\theta_{3}^{d,q}$. So, in this subsection, from now on we can assume that $d=q+1$.  

\smallskip

Let $P\in X^{3}(\mathbb{F}_q)$. After a projectivity, we can assume that 
$$P=(1:0:0:0:0),\quad T_PX^{3}:=V(x_{4})$$ and
$$S_P:=T_PX^{3}\cap X^3=V(x_{4}, x_0^{m}F_{q+1-m}+ \dots + x_0F_{q}+F_{q+1})\ ,$$ where $m\leq q-1$ is the maximum positive integer such that $F_{q+1-m}\neq 0$ and $F_i\in\mathbb{F}_q[x_1,x_2,x_3]$ are homogeneous polynomials of degree $i$ such that $F_{q+1}\neq 0$ because $X^3$ does not contain any $\mathbb{F}_q$-plane, and $(F_{q+1-m},\dots,F_q)\neq (0,\dots,0)$ by Proposition \ref{n=3 cone}. Define $$W_P':=V(x_0,x_{4},F_{q+1-m},\dots,F_{q+1})\subseteq V(x_0,x_4,F_{q+1-m})$$ and observe that $W_P'(\mathbb{F}_q)$ is a set parametrizing all the $\mathbb{F}_q$-lines contained in $S_P$ and passing through $P$.

\medskip

\noindent\textit{Claim $3$}. If $q\geq 3$, then we have either 

\begin{enumerate}

\item[(I)] $N_q(W_P')\leq q^2$, or 

\smallskip

\item[(II)] $m=1$, $S_P=V(x_4,x_0F_q+F_{q+1})$, $V(x_0,x_4,F_{q})$ is a union of $\mathbb{F}_q$-lines $L_1,\dots, L_q\subset V(x_0,x_4)=:\mathbb{P}_P^2$ such that $\bigcap_{1=1}^{q}L_i$ is an $\mathbb{F}_q$-point $Q$ and $N_q(W_P')=N_q(V(x_0,x_4,F_{q}))=q^2+1$. 

\end{enumerate}

\medskip

\noindent\textit{Proof.} If $2\leq m\leq q-1$, then $N_q(W_P')\leq (q+1-m)q+1\leq q^2+1-q\leq q^2-2$. Hence, suppose that 
$$m=1\ \mathrm{and\ } S_P=V(x_4,x_0F_q+F_{q+1})\ .$$ If $V(x_0,x_4,F_{q})$ is not a union of $\mathbb{F}_q$-lines, then by \cite[Corollary 1.7]{Co} we get $N_q(W_P')\leq N_q(V(x_0,x_4,F_{q}))\leq (q-1)q+2\leq q^2-1$. Thus, assume that $V(x_0,x_4,F_{q})$ is a union of $\mathbb{F}_q$-lines $L_1,\dots, L_q$.
If $\bigcap_{1=1}^{q}L_i$ is not a single $\mathbb{F}_q$-point, then $N_q(W_P')\leq N_q(V(x_0,x_4,F_{q}))\leq (q-1)q+1+(q+1)-(q-1)=q^2+3-q\leq q^2$, while if $\bigcap_{1=1}^{q}L_i$ is an $\mathbb{F}_q$-point $Q$ then $N_q(V(x_0,x_4,F_{q}))=q^2+1$.
Therefore, we conclude that either $N_q(W_P')\leq q^2$, or $N_q(W_P')=N_q(V(x_0,x_4,F_{q}))=q^2+1$. \hfill{Q.E.D.} 

\bigskip

\noindent Note that from the proof of \cite[Proposition 5]{T1} and \cite{D} it follows that either $N_q(X^3)<\theta_3^{d,q}$, or through every $\mathbb{F}_q$-point $P\in X^3$ there exists at least an $\mathbb{F}_q$-line $l$ such that $P\in l$.

\medskip

First of all, assume that $q\geq 3$. 

\medskip

\noindent\textit{Claim $4$}. There exists an $\mathbb{F}_q$-line $l\subset X^3$ such that $|\Omega(l)|\leq q^2-1$.

\medskip

\noindent\textit{Proof.} Consider an $\mathbb{F}_q$-line $l\subset X^3$. If for every $P\in l(\mathbb{F}_q)$ we are in case (I) of \textit{Claim $3$}, then $|\Omega_l(P)|\leq\frac{q^2-1}{q}=q-\frac{1}{q}<q$, i.e. $|\Omega_l(P)|\leq q-1$ for every $P\in l(\mathbb{F}_q)$ and $|\Omega(l)|\leq \sum_{P\in l(\mathbb{F}_q)}|\Omega_l(P)|\leq (q-1)(q+1)=q^2-1$.

Thus, suppose we are in case (II) of \textit{Claim $3$} for some $P\in l(\mathbb{F}_q)$. Consider the $\mathbb{F}_q$-line $l':=\langle P,O\rangle$, for some $O\in L_1(\mathbb{F}_q)$ and $O\neq Q$. Then $|\Omega_{l'}(P)|=1$. If for every $\mathbb{F}_q$-point $P'\in l'\setminus\{P\}$ we are in case (I) of \textit{Claim $3$}, then $|\Omega_{l'}(P')|\leq q-1$ and so $|\Omega(l')|\leq 1+(q-1)q < q^2-1$. 

Assume now that 
we are in case (II) of \textit{Claim $3$} for some $P'\in l'(\mathbb{F}_q)\setminus\{P\}$. After a suitable projectivity, using a similar notation as in \textit{Claim $3$}, we get that $S_{P'}=V(x_4,x_0F'_q+F'_{q+1})$, $V(x_0,x_4,F'_{q})$ is a union of $\mathbb{F}_q$-lines $L'_1,\dots, L'_q\subset V(x_0,x_4)=:\mathbb{P}_{P'}^2$ such that $Q'=\bigcap_{1=1}^{q}L'_i$ and $N_q(W_{P'}')=N_q(V(x_0,x_4,F'_{q}))=q^2+1$.  
Then, we have the following two possibilities: 

\smallskip

(j) $l'=\langle P', P''\rangle$ for some $P''\in L'_1(\mathbb{F}_q)\setminus\{Q'\}$; or 

\smallskip

(jj) $l'=\langle P',Q'\rangle$. 

\smallskip

\noindent In case (j), we obtain that $|\Omega_{l'}(P')|=1$ and then 
$$|\Omega (l')|\leq \sum_{A\in l'(\mathbb{F}_q)\setminus\{P,P'\}}|\Omega_{l'}(A)| +1+1\leq (q-1)q+2\leq q^2-1\ .$$ 

\noindent Suppose we are in case (jj). Observe that the $\mathbb{F}_q$-plane $\langle P',L'_1\rangle$ is contained in $T_{P'}X^3$ but not in $T_{P}X^3$ because $P\in\langle P',L'_1\rangle$ and $\langle P',L'_1\rangle\cap X^3$ is not singular at $P$. Hence $T_PX^3\neq T_{P'}X^3$. Define $T_PX^3\cap T_{P'}X^3=:\pi'=\mathbb{P}^2$. Since $l'\subset\pi'\subset T_{P'}X^3$, we deduce that $\pi'$ intersects $\mathbb{P}_{P'}^2$ in the unique $\mathbb{F}_q$-line $L'\subset \mathbb{P}_{P'}^2$ containing $Q'$ and distinct from all the $L'_j$'s. Moreover, since $l'\subset\pi'\subset T_{P}X^3$, we see that in $\pi'$ there are at least $q-1\geq 2$ $\mathbb{F}_q$-lines different from $l'$ and passing through $P\neq P'$. This shows that there exists an $\mathbb{F}_q$-point $Q''\in T_{P'}X^3\cap X^3$ such that $Q''\notin \bigcup_{j=1,...,q}\langle P',L'_j\rangle$. Consider an $\mathbb{F}_q$-line $L\subset T_{P'}X^3$ such that $Q''\in L$ and $l'\cap L=\emptyset$. Taking the $\mathbb{F}_q$-plane $\pi:=\langle P',L\rangle$, it follows that $\pi\cap X^3=\bigcup_{j=1}^{q+1}l_j$ is a union of distinct $\mathbb{F}_q$-lines $l_j$ such that, after renaming, $\bigcap_{i=1}^{q}l_i=P'$ and $P'\notin l_{q+1}$. Define $l:= l_{q+1}$ and set $l\cap \bigcup_{i=1}^{q}l_i=
\{P_1,\dots,P_q\}$ and $l(\mathbb{F}_q)=\{Q''=P_0,P_1,\dots,P_q\}$. 
For any $j=1,\dots,q$, after a projectivity, from \textit{Claim $3$} it follows that 
either (k) $N_q(W_{P_j}')\leq q^2$, or (kk) $S_{P_j}:=T_{P_j}X^3\cap X^3=V(x_4,x_0F_q^{(j)}+F_{q+1}^{(j)})$, $V(x_0,x_4,F_{q}^{(j)})$ is a union of $\mathbb{F}_q$-lines $L_1^{(j)},\dots, L_q^{(j)}\subset \mathbb{P}_{P_j}^2$ such that $\bigcap_{j=1}^{q}L_i^{(j)}$ is an $\mathbb{F}_q$-point $Q_j$ and $N_q(W_{P_j}')=N_q(V(x_0,x_4,F_{q}^{(j)}))=q^2+1$. 

Observe that case (kk) cannot occur because $\pi\subset T_{P_j}X^3$ and the $\mathbb{F}_q$-plane $\pi$ intersects $V(x_0,x_4,F_{q}^{(j)})=\bigcup_{i=1}^{q}L_i^{(j)}$ at $1$, $q+1$ or $q>2$ $\mathbb{F}_q$-points, giving a contradiction because in $\pi$ there are exactly two distinct $\mathbb{F}_q$-lines through each $P_j$ for every $j=1,\dots,q$. Thus we have 
$$N_q(W_{P_j}')\leq q^2\qquad \forall j=1,\dots,q\ ,$$
and this gives $|\Omega_l(P_j)|\leq \frac{q^2-1}{q}=q-\frac{1}{q}<q$, i.e. 
$$|\Omega_l(P_j)|\leq q-1 \qquad \forall j=1,\dots,q\ .$$ 

As to the $\mathbb{F}_q$-point $P_0$, note that $P_0\in l\subset T_{P_0}X^3$ and $\pi\not\subset T_{P_0}X^3$ because $\pi\cap X^3$ is not singular at $P_0$. Then, by \textit{Claim $3$} we get that either 

\begin{enumerate}
\item[(a)] $N_q(W_{P_0}')\leq q^2$ or, after a projectivity, 
\item[(b)] $S_{P_0}:= T_{P_0}X^3\cap X^3=V(x_4,x_0F_q^{(0)}+F_{q+1}^{(0)})$, $V(x_0,x_4,F_{q}^{(0)})$ is a union of $\mathbb{F}_q$-lines $L_1^{(0)},\dots, L_q^{(0)}\subset\mathbb{P}_{P_0}^2$ such that $\bigcap_{1=1}^{q}L_i^{(0)}$ is an $\mathbb{F}_q$-point $Q_0$ and $N_q(W_{P_0}')=N_q(V(x_0,x_4,F_{q}^{(0)}))=q^2+1$. 
\end{enumerate}

\smallskip

In case (a), we have 
$|\Omega_l(P_0)|\leq\frac{q^2-1}{q}<q$, i.e. $|\Omega_l(P_0)|\leq q-1$, and this gives
$|\Omega(l)|\leq \sum_{j=0}^{q}|\Omega_l(P_j)|\leq (q-1)(q+1)=q^2-1$. 

\smallskip

If we are in case (b), then either $Q_0\notin l$, $|\Omega_l(P_0)|\leq 1$ and $|\Omega(l)|\leq |\Omega_l(P_0)|+\sum_{j=1}^{q}|\Omega_l(P_j)|\leq 1+q(q-1) < q^2-1$, or $Q_0\in l$. 

Assume that $Q_0\in l$.
Since $\pi\not\subset T_{P_0}X^3$ and $\pi\subset T_{P_j}X^3$ for every $j=1,\dots,q$, we see that $\pi_j':=T_{P_0}X^3\cap T_{P_j}X^3$ is an $\mathbb{F}_q$-plane containing the $\mathbb{F}_q$-line $l=\langle P_0,Q_0\rangle$ and such that $\pi_j'$ does not contain any $\mathbb{F}_q$-line $L_i^{(0)}$, otherwise $\pi_j'\cap X^3\subseteq T_{P_j}X^3\cap X^3$ would be not singular at $P_j$. Thus 
$\pi_j'\subseteq T_{P_0}X^3$ intersects $\mathbb{P}_{P_0}^2\subseteq T_{P_0}X^3$ in the unique $\mathbb{F}_q$-line $L'_0$ distinct from the $L_i^{(0)}$'s and such that $Q_0\in L'_0$. This shows that 
$$\langle l,L'_0\rangle =\pi_j'\subset T_{P_j}X^3 $$ 
for every $j=1,\dots,q$. Since $\pi\subset T_{P_j}X^3$, $\langle l,L'_0\rangle\subset T_{P_j}X^3$ and $\pi\neq \langle l,L'_0\rangle\subseteq T_{P_0}X^3$, we deduce that
$\pi\ \cup \langle l,L'_0\rangle \subseteq T_{P_j}X^3$, that is, $T_{P_j}X^3=T=\mathbb{P}^3$ for every $j=1,\dots,q$.
Let $L''$ be an $\mathbb{F}_q$-line contained in $\mathbb{P}_{P_0}^2\subset T_{P_0}X^3$ and not passing through $Q_0$.
Consider now the $\mathbb{F}_q$-plane $\langle L'',P'\rangle$ and note that $l\ \cap \langle L'',P'\rangle  = \emptyset$, otherwise $P'\in T_{P_0}X^3$ and $\pi =\langle P',l\rangle\subseteq T_{P_0}X^3$, a contradiction. Moreover, observe that $T_{P_j}X^3\cap \langle L'',P'\rangle = T\cap \langle L'',P'\rangle $ is an $\mathbb{F}_q$-line $\hat{L}$ for every $j=1,\dots, q$, otherwise $l\cup\langle L'',P'\rangle\subseteq T_{P_j}X^3=T=\mathbb{P}^3$ and then $l \cap \langle L'',P'\rangle \neq \emptyset$, a contradiction. Thus $L''$ and $\hat{L}$ intersect in a point and they parametrize all the $\mathbb{F}_q$-planes containing $l$ and contained in $T_{P_0}X^3$ and $T=T_{P_j}X^3$, respectively.
This gives $|\Omega(l)|\leq 2q+1<q^2-1$ for any $q\geq 3$. \hfill{Q.E.D.} 

\bigskip

From \textit{Claim $4$} it follows that there is an $\mathbb{F}_q$-line $l\subset X^3$ such that $r:=q^2+q+1-|\Omega (l)|\geq q+2$. By arguing as in Subsection  \ref{sub1} and using the same notation as there, since $d=q+1$ we have
$$N_q(X^3)\leq  \theta_3^{d,q}+(d-1)q-r(d-2)-s(q-d+1) =$$
$$= \theta_3^{q+1,q}+q^2-r(q-1)\leq \theta_3^{q+1,q}+q^2-(q+2)(q-1)= $$
$$=\theta_3^{q+1,q}+2-q<\theta_3^{q+1,q}\ .$$

\medskip

Finally, assume that $q=2$. Hence $d=3$ and consider an $\mathbb{F}_2$-point $P\in X^3$ and an $\mathbb{F}_2$-line $l$ containing $P$. Then $l\subset T_PX^3$ and, after a projectivity, we have $S_P=T_PX^3\cap X^3=V(x_4,x_0F_2+F_3)$, where $F_i\in\mathbb{F}_2[x_1,x_2,x_3]$ for $i=2,3$, and one of the following possibilities can occur (e.g., see \cite[Theorem 5.2.4, $\S 7.2$]{H}):
\begin{enumerate}
\item[(A)] $F_2=x_1^2+x_2x_3$;
\item[(B)] $F_2=x_1^2+x_1x_2+x_2^2$;
\item[(C)] $F_2=x_1^2$;
\item[(D)] $F_2=x_1x_2$.
\end{enumerate}    
Write $\Gamma:=V(x_0,x_4,F_2)$. In Case (B), since $N_2(\Gamma)=1$, we have $l=\langle P,P'\rangle$, where $P'\in\Gamma(\mathbb{F}_2)$. Therefore, for any $\mathbb{F}_2$-plane $\pi\subset T_PX^3$ containing $l$ we deduce that $\pi\cap X^3=l\cup\gamma$. If $\gamma$ is irreducible, then $P\in\gamma$ and so $N_2((\pi\cap X^3)\setminus \{l\})\leq q=2$. On the other hand, if $\gamma$ is the union of two $\mathbb{F}_2$-lines $L_1,L_2$, then after renaming we see that $L_1=l$ and thus $N_2((\pi\cap X^3)\setminus \{l\})\leq q=2$. Hence we have $N_2((\pi\cap X^3)\setminus \{l\})\leq q=2$ for every $\mathbb{F}_2$-plane $\pi\subset T_PX^3$ such that $l\subset\pi$. So, we conclude that $N_2(S_P)=N_2(T_PX^3\cap X^3)\leq (q+1)q+(q+1)=9<11=(d-1)q^2+q+1,$ that is $N_2(X^3)<\theta_3^{3,2}$.  
In case (D), we see that $$X^3=V(x_4G_2+x_0x_1x_2+F_3)\ ,$$ where $G_2\in\mathbb{F}_2[x_0,\dots,x_4]$ with $\deg G_2=2$ and $F_3\in\mathbb{F}_2[x_1,x_2,x_3]$. Using the following Magma program 

\bigskip

\begin{verbatim}
K:=GF(2);
P<[x]>:=ProjectiveSpace(K,4);
AA:=VectorSpace(K,15);
DD:=VectorSpace(K,10);
A:=[x : x in AA];
D:=[x : x in DD];
i:=0;
List:={};
for a in [1..#A] do
 for d in [1..#D] do
 i:=i+1;
 AA:=A[a][1]*x[1]^2+A[a][2]*x[2]^2+A[a][3]*x[3]^2
 +A[a][4]*x[4]^2+A[a][5]*x[5]^2+A[a][6]*x[1]*x[2]
 +A[a][7]*x[1]*x[3]+A[a][8]*x[1]*x[4]+A[a][9]*x[1]*x[5]
 +A[a][10]*x[2]*x[3]+A[a][11]*x[2]*x[4]+A[a][12]*x[2]*x[5]
 +A[3][13]*x[3]*x[4]+A[a][14]*x[3]*x[5]+A[a][15]*x[4]*x[5];
 DD:=D[d][1]*x[2]^3+D[d][2]*x[3]^3+D[d][3]*x[4]^3
 +D[d][4]*x[2]^2*x[3]+D[d][5]*x[2]^2*x[4]+D[d][6]*x[3]^2*x[4]
 +D[d][7]*x[2]*x[3]^2+D[d][8]*x[2]*x[4]^2+D[d][9]*x[3]*x[4]^2
 +D[d][10]*x[2]*x[3]*x[4];
 X:=Scheme(P,[x[5]*(AA)+x[1]*x[2]*x[3]+(DD)]);
  if IsNonSingular(X) and #Points(X) ge 27 then 
  PP:=[x : x in Points(X)];
  h:=0;
   for j in [1..#PP] do
   TS:=TangentSpace(X,PP[j]);
   S:=TS meet X;
   TC:=TangentCone(S,PP[j]);
    if TC eq S then
    h:=h+1;
    end if;
   end for;
   if h eq 0 then
   X;
   #Points(X);
   x[5]*(AA)+x[1]*x[2]*x[3]+(DD);
   List:=List join {x[5]*(AA)+x[1]*x[2]*x[3]+(DD)};
   end if;
  end if;
  i;
 end for;
end for;
\end{verbatim}

\bigskip

\noindent we can see that there are no
nonsingular hypersurfaces $X^3\subset\mathbb{P}^4$ with $N_q(X^3)\geq 27$ which satisfy 
the condition that $T_PX^3\cap X^3$ is not a cone for every $\mathbb{F}_q$-point $P\in X^3$, that is, $N_2(X^3)<\theta_3^{3,2}$.  

\medskip

\noindent Therefore, for every $P\in X^3(\mathbb{F}_2)$ we can assume that $S_P:=T_pX^3\cap X^3$ is as in cases (A), or (C).

In case (A), we see that $\Gamma$ is a nonsingular conic. Set $l=\langle P,P'\rangle$ for some $P'\in\Gamma (\mathbb{F}_2)$. Consider the tangent $\mathbb{F}_2$-line $L_1:=T_{P'}\Gamma$ and the $\mathbb{F}_2$-plane $\pi_1:=\langle P,L_1\rangle$. Then $\pi_1\cap X^3=l\cup \gamma$, where $\gamma$ is a conic. If $\gamma$ is irreducible, then $P\in \gamma$ and so $N_2(\pi_1\cap X^3\setminus\{l\})\leq q=2$.
If $\gamma=l'\cup l''$ is the union of two $\mathbb{F}_2$-lines $l',l''$, then after renaming we obtain that $P\in l'=l$ and then $N_2(\pi_1\cap X^3\setminus\{l\})\leq q=2$. Consider now the other two $\mathbb{F}_2$-lines $L_2,L_3$ contained in $\mathbb{P}_P^2:=V(x_0,x_4)$, distinct from $L_1$ and passing through $P'$. Set $\pi_i:=\langle P,L_i\rangle$ for $i=2,3$. Then $\pi_i\cap X^3=l\cup\gamma_i$ for $i=2,3$. If $\gamma_i$ is an irreducible conic, then $P\in\gamma_i$ and so $N_2(\pi_i\cap X^3\setminus\{l\})\leq q=2$. If $\gamma_i$ is the union of two $\mathbb{F}_2$-lines $l_i',l_i''$, then after renaming, we obtain that $P\in l_i'$ and $P\notin l_i''$ if $l_i''\neq l$. Thus we get $N_2(\pi_i\cap X^3\setminus\{l\})\leq 2q+1-2=q+1$ for $i=2,3$. This gives $N_2(S_P)\leq q+2(q+1)+(q+1)=11=(d-1)q^2+q+1$.

Suppose now we are in case (C). Then $\Gamma$ is a double $\mathbb{F}_2$-line and $l:=\langle P,P'\rangle$ for some $P'\in \Gamma(\mathbb{F}_2)$. Consider the two distinct $\mathbb{F}_2$-lines $L_2', L_3'$ in $\mathbb{P}_P^2$ passing through $P'$ and not contained in $\Gamma$. Define $\pi_i':=\langle P,L_i'\rangle$ for $i=2,3$. Then $\pi_i'\cap X^3=l\cup\gamma_i'$, where $\gamma_i'$ is a conic for $i=2,3$. Then by arguing as in case (A) for $\pi_1$, we deduce that $N_2(\pi_i'\cap X^3\setminus\{l\})\leq q=2$ for $i=2,3$. This shows that $N_2(S_P)\leq q+q+2q+(q+1)=5q+1=11=(d-1)q^2+q+1$. 

On the other hand, if we are in case (A) for some $P\in X^3(\mathbb{F}_2)$, considering $\pi_1=\langle P,L_1\rangle$ again, we can see that there exists a nonsingular $\mathbb{F}_2$-point $P''$ of $\pi_1\cap X^3$ ($P''\in l\setminus\gamma$ when $\gamma$ is irreducible, or $P''\in l''\setminus l$ when $\gamma$ is reducible in two $\mathbb{F}_2$-lines) such that $\pi_1\not\subset T_{P''}X^3$ and there exists an $\mathbb{F}_2$-line $L\subset\pi_1$ such that $P''\in L\not\subset T_{P''}X^3$ and $N_2(X^3\cap L\setminus\{P''\})\leq 1$. Since $S_{P''}$ can be assumed to be as in case (A), or (C), we obtain that
$$N_2(X^3)\leq N_2(S_{P''})+(d-1)(q^3-1)+(d-2)\leq\theta^{3,2}_3-1<\theta^{3,2}_3\ .$$ 

Finally, if we are in case (C) for some $P\in X^3(\mathbb{F}_2)$, by considering for example the $\mathbb{F}_2$-plane $\pi_2'$ and arguing as above for the case (A), we can see that there exists a nonsingular $\mathbb{F}_2$-point $P''$ of $\pi_2'\cap X^3$ such that $\pi_2'\not\subset T_{P''}X^3$ and there exists an $\mathbb{F}_2$-line $L'\subset\pi_2'$ with $P''\in L'\not\subset T_{P''}X^3$ and such that $N_2(X^3\cap L\setminus\{P''\})\leq 1$. Since $S_{P''}$ can be assumed to be as in case (A), or (C), we conclude again that
$$N_2(X^3)\leq N_2(S_{P''})+(d-1)(q^3-1)+(d-2)\leq\theta^{3,2}_3-1<\theta^{3,2}_3\ .$$    

\smallskip

\noindent All the above arguments complete the proof of Theorem \ref{thm1} and the previous Magma program also provides further
examples of nonsingular hypersurfaces $X^3\subset\mathbb{P}^4$ of degree $3$ defined over $\mathbb{F}_2$
with $N_2(X^3)=\theta_{3}^{3,2}$.

\section{Proof of Theorem \ref{thm2}}\label{sec 3}

\noindent Write $n:=2k+1$, where $k\in\mathbb{Z}_{\geq 1}$, and suppose that $d\leq q$.

\smallskip

Assuming that Theorem \ref{thm2} is true for some $k\in\mathbb{Z}_{\geq 1}$, our main goal here will be to prove it for $k+1$.
As a consequence of this inductive argument, since from Subsection \ref{sub1} we know that Theorem \ref{thm1} holds for $d\leq q$, it will follow that Theorem \ref{thm2} is true for any odd integer $n\geq 5$.  

\smallskip

So, let us assume that Theorem \ref{thm2} holds for some $k\in\mathbb{Z}_{\geq 1}$ and consider a nonsingular hypersurface $X^{2k+3}$ of degree $d\geq 2$ in $\mathbb{P}^{2(k+2)}$ defined over $\mathbb{F}_q$ with $d\leq q$. 

\medskip

First of all, let us prove the following preliminary result.    

\bigskip

\noindent\textit{Claim $5$}. If there exists an $\mathbb{F}_q$-point $P\in X^{2k+3}$ such that 
$T_PX^{2k+3}\cap X^{2k+3}$ is a cone $P*W^{2k+1}$ over some hypersurface $W^{2k+1}\subset\mathbb{P}^{2(k+1)}$, then $W^{2k+1}$ is nonsingular, $N_q(X^{2k+3})\leq\theta_{2k+3}^{d,q}$
and the equality is attained only if  $N_q(W^{2k+1})=\theta_{2k+1}^{d,q}$.

\medskip

\noindent\textit{Proof.} By Lemma \ref{lem-cone}, we know that $W^{2k+1}$ is a nonsingular hypersurface defined over $\mathbb{F}_q$. Thus, by the inductive hypothesis, we have $N_q(W^{2k+1})\leq \theta_{2k+1}^{d,q}$. Hence, we conclude that $$N_q(X^{2k+3})\leq (d-1)q^{2k+3}+N_q(P*W^{2k+1})\leq (d-1)q^{2k+3}+$$
$$+qN_q(W^{2k+1})+1\leq (d-1)q^{2k+3}+q\ \theta_{2k+1}^{d,q}+1=:\theta_{2k+3}^{d,q}\ ,$$ and if $N_q(X^{2k+3})=\theta_{2k+3}^{d,q}$, then necessarily $N_q(W^{2k+1})=\theta_{2k+1}^{d,q}$. \hfill{Q.E.D.}

\bigskip

\noindent By \textit{Claim} $5$ and Lemma \ref{lem-irred}, we can assume that $Y_Q:=T_QX^{2k+3}\cap X^{2k+3}$ is irreducible over $\mathbb{F}_q$ and it is not a cone for every $Q\in X^{2k+3}(\mathbb{F}_q)$. 

\medskip

\noindent Let $P\in X^{2k+3}(\mathbb{F}_q)$. As in Subsection \ref{sub2}, after a suitable projectivity, we can suppose that 
$$P=(1:0:\dots :0),\quad T_PX^{2k+3}:=V(x_{2(k+2)})$$ and
$$Y_P:=V(x_{2(k+2)}, x_0^{d-2}F_2+ \dots + x_0F_{d-1}+F_d)\ ,$$ where $F_i\in\mathbb{F}_q[x_1,\dots,x_{2k+3}]$ are homogeneous polynomials of degree $i$ for every $i=2,\dots, d$; moreover, note that $(F_2,\dots ,F_{d-1})\neq (0,\dots,0)$ and $F_d\neq 0$ by the hypothesis on $Y_P$. Let $j$ be the minimum index such that $F_j\neq 0$ for $j=2,\dots,d-1$. 
Define $$W_P:=V(x_0,x_{2(k+2)},F_j,\dots,F_d)\subseteq X^{2k+3}$$ and note that $W_P(\mathbb{F}_q)$ is the set parametrizing all the $\mathbb{F}_q$-lines in $Y_P$ passing through $P$, i.e. $l$ is an $\mathbb{F}_q$-line contained in $Y_P\subseteq X^{2k+3}$ and passing through $P$ if and only if $l$ is an $\mathbb{F}_q$-line passing through $P$ and an $\mathbb{F}_q$-point $Q\in W_P$. 
Note that $W_P\subset V(x_0,x_{2(k+2)})=\mathbb{P}^{2k+2}$. 

\bigskip

\noindent\textit{Claim $6$}. If $d\leq q$, then we have
$N_q(W_P)\leq (d-1)q^{2k+1}+N_q(\mathbb{P}^{k-1})$ for any $P\in X^{2k+3}(\mathbb{F}_q)$.

\medskip

\noindent\textit{Proof.} If $j\leq d-2$, then 
we have
$$N_q(W_P)\leq N_q(V(x_0,x_{2(k+2)},F_j))\leq j\ q^{2k+1}+q^{2k}+\dots+q+1\leq$$
$$\leq (d-1)q^{2k+1}-q^{2k+1}+(q^{2k}+\dots+1)<(d-1)q^{2k+1}\ .$$
   
\noindent If $j=d-1$, then $Y_P=V(x_{2(k+2)}, x_0F_{d-1}+F_d)$. Define 
$$W_{d-1}^{2k+1}:=V(x_0,x_{2(k+2)},F_{d-1})\subset \mathbb{P}^{2(k+1)}=V(x_0,x_{2(k+2)})\ $$
and observe that $N_q(W_P)\leq N_q(W_{d-1}^{2k+1})$. Then we have the following two possibilities: 

\medskip

$(\alpha)$ $W_{d-1}^{2k+1}$ is not a union of $\mathbb{P}^{2k+1}$'s; 

\smallskip

$(\beta)$ $W_{d-1}^{2k+1}=\bigcup_{i=1}^{d-1}L_i$, where each $L_i$ is an $\mathbb{F}_q$-linear subspace $\mathbb{P}^{2k+1}$. 

\medskip

\noindent In case $(\alpha)$, write $W_{d-1}^{2k+1}:=\left(\bigcup_{j=1}^{s}L_j\right)\cup W'$ for $0\leq s\leq d-3$ and $3\leq d\leq q$, where $L_j=\mathbb{P}^{2k+1}$ for any $j=1,\dots,s$ and $W'$ is a hypersurface in $\mathbb{P}^{2(k+1)}=V(x_0,x_{2(k+2)})$ defined over $\mathbb{F}_q$ of degree $d-s-1$ and without $\mathbb{F}_q$-linear components. If $s=0$, then
from \cite{HK4} it follows that
$$N_q(W_{d-1}^{2k+1})\leq (d-1)q^{2k+1}-q^{2k+1}+dq^{2k}-q^{2k}+q^{2k-1}+\dots+q+1\leq $$
$$\leq (d-1)q^{2k+1}-(q^{2k}-q^{2k-1}-\dots-q-1)<(d-1)q^{2k+1}\ .$$
If $s\geq 1$, then $4\leq d\leq q$ and 
$$N_q(W_{d-1}^{2k+1})\leq (sq^{2k+1}+q^{2k}+q^{2k-1}+\dots+1)+(d-s-2)q^{2k+1}+$$
$$+(d-s-1)q^{2k}+q^{2k-1}+\dots+1\leq (d-1)q^{2k+1}-q^{2k+1}+(d-s)q^{2k}+$$
$$+2q^{2k-1}+\dots+2q+2\leq (d-1)q^{2k+1}-q^{2k}+2q^{2k-1}+\dots +2q+2=$$
$$= (d-1)q^{2k+1}-(q^{2k}-2q^{2k-1}-\dots -2q-2)<(d-1)q^{2k+1}\ .$$
Hence $N_q(W_P)<(d-1)q^{2k+1}$. 

Finally, consider case $(\beta)$. 
Define $$W_i:=L_i\cap V(x_0,x_{2(k+2)},F_d)\subseteq\mathbb{P}^{2k+1}$$ and observe that $k_i:=k_{W_i}\leq k$
for every $i=1,\dots,d-1$, because $W_i\subseteq W_P\subseteq X^{2k+3}$ and $k_{W_i}+1\leq k_{X^{2k+3}}\leq k+1$. 

Up to renaming, assume that $k\geq k_1\geq k_2\geq\dots\geq k_{d-1}$. Since from \cite[Theorem 3.2]{HK7} and \cite{T1} it follows that 
$$N_q(W_i)\leq (d-1)q^{k_i}N_q(\mathbb{P}^{2k-k_i})+N_q(\mathbb{P}^{k_i})=$$
$$= (d-1)q^{k_1}(q^{2k-k_1}+\dots+q)+dq^{k_1}+N_q(\mathbb{P}^{k_1-1})$$
for any $i=1,\dots,d-1$, we get
$$N_q(W_P)\leq N_q(W_1)+\sum_{j=2}^{d-1}\left[N_q(W_j)-N_q(\mathbb{P}^{k_1-1})\right]\leq $$
$$\leq (d-1)[(d-1)q^{2k}+(d-1)q^{2k-1}+\dots+(d-1)q^{k_1+1}+dq^{k_1}]+N_q(\mathbb{P}^{k_1-1})\leq$$
$$\leq (d-1)dq^{2k}+N_q(\mathbb{P}^{k_1-1})\leq
(d-1)q^{2k+1}+N_q(\mathbb{P}^{k-1})\ ,$$
which gives the statement. \hfill{Q.E.D.}

\bigskip

\noindent Observe that we can futher assume that 
$Y_P$ contains at least an $\mathbb{F}_q$-line $l$ passing through $P$, otherwise from the proof of \cite[Proposition $5$]{T1} we deduce that 
$$N_q(X^{2k+3})\leq (d-1)q^{2k+3}+(d-2)N_q(\mathbb{P}^{2k+2})+1=$$
$$=(d-1)\left(q^{2k+3}+q^{2k+2}+\dots+q^{k+2}+N_q(\mathbb{P}^{k+1})\right)-N_q(\mathbb{P}^{2k+2})+1 =$$
$$= (d-1)q^{k+2}N_q(\mathbb{P}^{k+1})+(d-1)N_q(\mathbb{P}^{k+1})-N_q(\mathbb{P}^{2k+2})+1 \leq$$ 
$$\leq (d-1)q^{k+2}N_q(\mathbb{P}^{k+1})+qN_q(\mathbb{P}^{k+1})+1-N_q(\mathbb{P}^{k+1})-N_q(\mathbb{P}^{2k+2})=$$
$$= (d-1)q^{k+2}N_q(\mathbb{P}^{k+1})+q^{k+2}-N_q(\mathbb{P}^{2k+2})=$$
$$= \theta_{2k+3}^{d,q}-N_q(\mathbb{P}^{k+1})+q^{k+2}-N_q(\mathbb{P}^{2k+2})<\theta_{2k+3}^{d,q}\ .$$ 
Let $\delta$ be the number of $\mathbb{F}_q$-planes containing $l$ and contained in $X^{2k+3}$, and let $\epsilon_P$ be the number of $\mathbb{F}_q$-lines passing through $P$ which are distinct from $l$ and not contained in the $\delta$ $\mathbb{F}_q$-planes. Since the $\mathbb{F}_q$-linear space $\mathbb{P}^{2(k+1)}=V(x_0,x_{2(k+2)})$ does not contain $l$, by \textit{Claim} $6$ we deduce that 
$$\delta q+\epsilon_P+1\leq N_q(W_P)\leq (d-1)q^{2k+1}+N_q(\mathbb{P}^{k-1}):=\tau\ ,$$
i.e. $\epsilon_P\leq \tau-\delta q-1$.
Since $P$ is any $\mathbb{F}_q$-point of $X^{2k+3}$, it follows that the total number of $\mathbb{F}_q$-lines distinct from $l$, not contained in the $\delta$ $\mathbb{F}_q$-planes and which intersect $l$, is given by $$\sum_{Q\in l(\mathbb{F}_q)}\epsilon_Q\leq (\tau-\delta q-1)(q+1)\ .$$ Therefore the number $\Omega$ of $\mathbb{F}_q$-planes containing $l$ and such that their intersection with $X^{2k+3}$ is a union of $d$ $\mathbb{F}_q$-lines is such that 
$$\Omega\leq \frac{(\tau-\delta q-1)(q+1)}{d-1}\leq \left[q^{2k+1}+\frac{N_q(\mathbb{P}^{k-1})}{d-1}-\delta-\frac{\delta+1}{d-1}\right](q+1)=:\sigma.$$ So, considering an $\mathbb{F}_q$-linear space $\Pi=\mathbb{P}^{2(k+1)}$ such that $l\cap \Pi=\emptyset$ and whose $\mathbb{F}_q$-points parametrize all the $\mathbb{F}_q$-planes containing $l$, we get
$$N_q(X^{2k+3})\leq N_q(l)+\delta q^2 + \Omega (d-1)q +\left(N_q(\Pi)-\delta-\Omega\right)[(d-2)q+1]=$$
$$=(q+1)+\delta q^2 + \Omega [(d-1)q-(d-2)q-1] +\left(N_q(\Pi)-\delta\right)[(d-2)q+1]\leq$$
$$\leq (q+1)+\delta q^2 + \sigma (d-1)q +\left(N_q(\Pi)-\delta-\sigma\right)[(d-2)q+1]=$$
$$=(q+1)+\delta q^2 + \sigma (d-1)q +\left(N_q(\Pi)-\delta-\sigma\right)[(d-1)q-(q-1)]=$$
$$=(q+1)+\delta q^2 +(d-1)q\left[N_q(\Pi)-\delta\right]-(q-1)\left[N_q(\Pi)-\delta-\sigma\right]\ .$$
Then we have
$$N_q(X^{2k+3})\leq (q+1)+\delta q^2 +(d-1)q\left[N_q(\Pi)-\delta\right]+$$
$$+(1-q)\left[N_q(\mathbb{P}^{2k})+\delta q-\frac{N_q(\mathbb{P}^{k-1})(q+1)}{d-1}+\frac{(\delta+1)(q+1)}{d-1}\right]=$$
$$=(q+1)+\delta q^2+\theta_{2k+3}^{d,q}-N_q(\mathbb{P}^{k+1})+(d-1)(q^{k+1}+\dots+q)+$$
$$+(1-d)q\delta -(q-1)N_q(\mathbb{P}^{2k})-\delta q (q-1)+\frac{N_q(\mathbb{P}^{k-1})(q^2-1)}{d-1}+$$
$$+\frac{(\delta+1)(q+1)(1-q)}{d-1}
\leq \theta_{2k+3}^{d,q}-N_q(\mathbb{P}^{k+1})+(q+1)+\delta q^2+$$
$$+(d-1)(q^{k+1}+\dots+q)-(d-1)q\delta -(q-1)N_q(\mathbb{P}^{2k})-\delta q (q-1)+$$
$$+N_q(\mathbb{P}^{k-1})(q^2-1)-(\delta+1)(q+1)\leq
\theta_{2k+3}^{d,q}-N_q(\mathbb{P}^{k+1})+$$
$$+(q-1)(q^{k+1}+\dots+q)-(d-1)q\delta -\delta+(1-q)N_q(\mathbb{P}^{2k})+$$
$$+N_q(\mathbb{P}^{k-1})(q^2-1)\leq
\theta_{2k+3}^{d,q}-2q+q^{k+2}-N_q(\mathbb{P}^{k-1})-q^{2k+1}\ ,$$
i.e. $N_q(X^{2k+3})<\theta_{2k+3}^{d,q}$. This concludes the proof of Theorem \ref{thm2}.

\bigskip

\end{document}